\newcommand{\kb}{k_\mathrm{b}}
\newcommand{\eps}{\varepsilon}
\newcommand{\epsb}{\varepsilon_\mathrm{b}}
\newcommand{\mub}{\mu_\mathrm{b}}
\newcommand{\sigmab}{\sigma_\mathrm{b}}
\newcommand{\epsa}{\varepsilon_\mathrm{a}}
\newcommand{\sigmaa}{\sigma_\mathrm{a}}
\newcommand{\set}[1]{\left\{#1\right\}}
\newcommand{\abs}[1]{\left|#1\right|}
\newcommand{\E}{\mathrm{E}_{\inc}^{(z)}}
\newcommand{\mE}{\mathbf{E}}
\newcommand{\mH}{\mathbf{H}}
\newcommand{\mU}{\mathbf{U}}
\newcommand{\mV}{\mathbf{V}}
\newcommand{\mW}{\mathbf{W}}
\newcommand{\p}{\partial}
\newcommand{\rd}{\mathrm{d}}
\newcommand{\ma}{\mathbf{a}}
\newcommand{\ms}{\mathbf{z}}
\newcommand{\mx}{\mathbf{x}}
\newcommand{\vt}{\boldsymbol{\theta}}
\DeclareMathOperator*{\inc}{inc}
\DeclareMathOperator*{\tot}{tot}
\DeclareMathOperator*{\inp}{in}
\DeclareMathOperator*{\out}{out}
\DeclareMathOperator*{\meas}{meas}
\DeclareMathOperator*{\diam}{diam}
\DeclareMathOperator*{\area}{area}
\theoremstyle{plain}
\newtheorem{theorem}{Theorem}[section]
\newtheorem{corollary}{Corollary}[section]
\theoremstyle{remark}
\newtheorem{remark}{Remark}[section]
\newtheorem{example}{Example}[section]
\begin{document}

\title{On the application of subspace migration from scattering matrix with constant-valued diagonal elements in microwave imaging}
\author{Won-Kwang Park}
\affil{Department of Information Security, Cryptology, and Mathematics, Kookmin University, Korea.\\
e-mail: parkwk@kookmin.ac.kr}
\date{}

\maketitle

\begin{abstract}
We consider the application of a subspace migration (SM) algorithm to quickly identify small objects in microwave imaging. In various problems, it is easy to measure the diagonal elements of the scattering matrix if the location of the transmitter and the receiver is the same. To address this issue, several studies have been conducted by setting the diagonal elements to zero. In this paper, we generalize the imaging problem by setting diagonal elements of the scattering matrix as a constant with the application of SM. To show the applicability of SM and its dependence on the constant, we show that the imaging function of SM can be represented in terms of an infinite series of the Bessel functions of integer order, antenna number and arrangement, and applied constant. This result enables us to discover some further properties, including the unique determination of objects. We also demonstrated simulation results with synthetic data to support the theoretical result.
\end{abstract}

\section{Introduction}\label{sec:1}
There are considerable and intriguing inverse problems related to retrieving inhomogeneities embedded in a medium under multistatic measurement circumstances. This is an old but interesting problem due to its diverse applications, which significantly impact human life. For instance, it can be used in biomedical imaging \cite{A1,CJGT}, thermal therapy monitoring \cite{HSM2,KLKJS}, landmine detection \cite{CGSMMRW,GCGGC}, damage detection \cite{KJFF,SH}, and radar imaging \cite{C7,SK}. To this end, several researchers have explored and successfully applied various iterative (quantitative) and non-iterative (qualitative) reconstruction algorithms to address the different inverse scattering and microwave imaging problems.

Examples of iterative schemes include the Gauss-Newton method for biomedical imaging \cite{RMMP}, Born iterative method for brain stroke detection \cite{IBA} and compressive sensing imaging \cite{OAM}, Levenverg-Marquardt method for reconstructing parameter distribution \cite{FC}, and level-set method for shape reconstruction \cite{DL}. For a successful application of iterative-based algorithms, the iteration procedure must start with a good initial guess, which is close to the true solution. If not, it will be challenging to obtain good results due to the non-convergence issue, local minimizer problem, and requirement of high computational costs (see \cite{KSY,PL4} for instance).

Although complete information on parameter distribution or unknown targets cannot be retrieved through the non-iterative scheme, an outline shape of the target can be obtained quickly and adopted as a good initial guess. For example, bistatic method for object detection in microwave imaging \cite{SP2} and damage detection of civil structure \cite{KJFF}, MUltiple SIgnal Classification (MUSIC) algorithm for detecting internal corrosion \cite{AKKLV} and object detection in microwave imaging \cite{P-MUSIC6}, direct sampling method for diffusive optical tomography \cite{CILZ} and electrical impedance tomography \cite{CIZ}, topological derivative strategy for ultrasonic non-destructive testing \cite{AD} and detection of impedance obstacles \cite{LR2}, orthogonality sampling method for object detection in microwave imaging \cite{P-OSM2} and 3D objects \cite{LNST}, factorization method for breast cancer imaging \cite{CBC} and electrical impedance tomography \cite{HSW}, and linear sampling method for underwater acoustic imaging \cite{A7} and microwave imaging \cite{AHAM}.

In this paper, we explore an application of the SM for the rapid identification of a two-dimensional small object with different dielectric permittivity and electric conductivity values compared to the background homogeneous medium. SM is a recently developed and promising non-iterative imaging technique in inverse scattering problems and microwave imaging. It stands out for its speed, effectiveness in recognizing target locations and shapes, and robustness against random noise. As a result, SM has been successfully applied to various problems, including localization of small and extended targets \cite{AGKLS,BPV,P-SUB18}, crack imaging \cite{AGKPS,P-SUB3,P-SUB17}, and object detection in microwave imaging \cite{P-SUB10,P-SUB11,P-SUB16}.

In designing an imaging function of SM, it is essential to consider the complete elements of the so-called multistatic response matrix in inverse scattering problems and scattering matrix in microwave imaging. However, in some microwave imaging applications, collecting the diagonal elements of the scattering matrix becomes challenging due to the difficulty in measuring scattering parameter data  (elements of the scattering matrix) or discerning weak scattered signal from the relatively high antenna reflections when the transmitting and receiving antennas coincide. For a detailed discussion on this topic, refer to \cite{KLKJS,P-SUB16,SKLKLJC,SLP,SSKLJ}. Therefore, applying SM directly without unknown diagonal elements of the scattering matrix is highly challenging. SM has been applied by setting the unknown diagonal elements to zero. Numerical simulations demonstrate that setting the diagonal elements to a nonzero constant yields a poor result. As a result, most studies on microwave imaging problems have adopted the approach of setting the diagonal elements to zero. However, the theoretical rationale behind the effect of converting these elements to a constant value remains unexplored.

In this study, we consider the application of SM for the fast identification of small objects from a scattering matrix whose elements are scattering parameter data. By converting the unknown diagonal elements of the scattering matrix into a fixed constant, we introduce the imaging function of SM and show that it can be represented by an infinite series of the Bessel function of integer order, total number of antennas and their arrangement, applied frequency, material properties of the background, and applied constant. Based on the explored structure of the imaging function, we confirm that the imaging performance significantly depends on the applied constant and the object can be identified uniquely if the absolute value of such constant is zero or close to zero. We present simulation results from synthetic data to demonstrate the theoretical result.

The remainder of the paper is organized as follows. In Section \ref{sec:2}, we briefly present the concept of scattering parameter in the presence of small object and introduce the imaging function of SM from a scattering matrix whose diagonal elements are fixed constant. In Section \ref{sec:3}, we construct a mathematical theory for the imaging function, discuss various properties of the imaging function, including ideal and practical conditions for obtaining good results, and explain the unique determination of object. Section \ref{sec:4} presents a set of numerical simulation results for single and multiple objects to support the theoretical result. Finally, a short conclusion is presented in Section \ref{sec:5}.

\section{Scattering Parameters and Imaging Function of the SM}\label{sec:2}
This section briefly introduces the scattering parameters and the imaging function of SM for identifying small object $D$ with smooth boundary $\p D$. Throughout this paper, we denote $\Omega$ as a homogeneous region filled by a matching liquid and $\Lambda_n$ as a dipole antenna located at $\ma_n$ with $|\ma_n|=R$, $n=1,2,\cdots,N$. We assume that $D$ and $\Omega$ are characterized by their dielectric permittivity and electrical conductivity at a given angular frequency $\omega=2\pi f$, where $f$ denotes the ordinary frequency measured in \texttt{hertz}, and time-harmonic dependence $e^{-i\omega t}$. Thus, the value of the magnetic permeability is set to constant $\mu(\ms)\equiv\mub=4\pi\times \SI{e-7}{\henry/\meter}$ for every $\ms\in\Omega$. The values of permittivity of $\Omega$ and $D$ are denoted by $\epsb$ and $\epsa$, respectively. Analogously, $\sigmab$ and $\sigmaa$ denote the values of the conductivity of $\Omega$ and $D$, respectively. With this, we denote $k_0$ and $\kb$ as the lossless background and background wavenumbers that satisfy
\[k_0^2=\omega^2\epsb\mub\quad\text{and}\quad \kb^2=\omega^2\mub\left(\epsb-i\frac{\sigmab}{\omega}\right),\]
respectively.

Let $\mathcal{S}(n,m)$ be the scattering parameter, which is defined as the following ratio
\[\mathcal{S}(n,m)=\frac{\mathcal{V}_{\out}^{(n)}}{\mathcal{V}_{\inp}^{(m)}},\]
where $\mathcal{V}_{\inp}^{(m)}$ and $\mathcal{V}_{\out}^{(n)}$ denote the input voltages (or incident waves) at $\Lambda_{m}$ and the output voltages (or reflected waves) at $\Lambda_{n}$, respectively. We denote $\mathcal{S}_{\meas}(n,m)$ as the measurement data obtained by subtracting scattering parameters with and without $D$. Then, based on \cite{HSM2}, $\mathcal{S}_{\meas}(n,m)$ is given by
\begin{equation}\label{ScatteringParameter}
  \mathcal{S}_{\meas}(n,m)=-\frac{ik_0^2}{4\omega\mub}\int_{D}\left(\frac{\epsa-\epsb}{\epsb}+i\frac{\sigmaa-\sigmab}{\omega\epsb}\right)\mE_{\inc}(\ma_{m},\mx)\cdot\mE_{\tot}(\mx,\ma_{n})\rd\mx,
\end{equation}
where $\mE_{\inc}(\ma_{m},\mx)$ is the incident field due to the point current density at $\Lambda_{m}$ that satisfies
\[\nabla\times\mE_{\inc}(\ma_{m},\mx)=i\omega\mub\mH(\ma_{m},\mx)\quad\mbox{and}\quad\nabla\times\mH(\ma_{m},\mx)=(\sigma_\mathrm{b}-i\omega\eps_\mathrm{b})\mE_{\mbox{\tiny inc}}(\ma_{m},\mx),\]
and $\mE_{\tot}(\mx,\ma_{n})$ be the total field owing to the presence of the $D$ measured at $\Lambda_n$ that satisfies
\[\nabla\times\mE_{\tot}(\mx,\ma_{n})=i\omega\mub\mH(\mx,\ma_{n})\quad\mbox{and}\quad\nabla\times\mH(\mx,\ma_{n})=(\sigma(\mx)-i\omega\eps(\mx))\mE_{\tot}(\mx,\ma_{n}),\]
with transmission conditions at the boundary $\p D$. Here, $\mH$ denotes the magnetic field.

Now, assume that the following relations hold
\[\omega\epsb\gg\sigmab\quad\text{and}\quad\sqrt{\epsa}\diam(D)<\sqrt{\epsb}\lambda,\]
where $\diam(D)$ and $\lambda$ denote the diameter of $D$ and wavelength, respectively. Then, based on \cite{SKL}, it is possible to apply the first-order Born approximation $\mE_{\tot}(\mx,\ma_{n})\approx\mE_{\inc}(\mx,\ma_{n})$ to \eqref{ScatteringParameter} and correspondingly, we can observe that
\begin{equation}\label{Approximation1}
\mathcal{S}_{\meas}(n,m)\approx-\frac{ik_0^2}{4\omega\mub}\int_{D}\left(\frac{\epsa-\epsb}{\epsb}+i\frac{\sigmaa-\sigmab}{\omega\epsb}\right)\mE_{\inc}(\ma_{m},\mx)\cdot\mE_{\inc}(\mx,\ma_{n})\rd\mx.
\end{equation}
Let us emphasize that based on the configuration of the microwave machine, only the $z-$component of the incident and total fields can be handled because the antennas are arranged perpendicular to the $z-$axis, refer to \cite{SKLKLJC,SLP}. Then, by letting $z-$component of incident field $\mE_{\inc}$ as $\E$ and the reciprocity property of the $\mE_{\inc}$, \eqref{Approximation1} can be written by
\begin{equation}\label{Approximation}
\mathcal{S}_{\meas}(n,m)\approx-\frac{ik_0^2}{4\omega\mub}\int_{D}\mathcal{O}_D\E(\ma_{m},\mx)\E(\ma_{n},\mx)\rd\mx,
\end{equation}
where
\begin{equation}\label{ObjectIncident}
\mathcal{O}_D=\frac{\epsa-\epsb}{\epsb}+i\frac{\sigmaa-\sigmab}{\omega\epsb}\quad\text{and}\quad\E(\ma_n,\mx)=-\frac{i}{4}H_0^{(1)}(\kb|\ma_n-\mx|).
\end{equation}
Here, $H_0^{(1)}$ denotes the Hankel function of order zero of the first kind.

Next, we briefly introduce the imaging function of SM. Note that to introduce the imaging function, every element $\mathcal{S}_{\meas}(n,m)$ of the scattering matrix can be measured, i.e., the following scattering matrix must be available
\[\mathbb{K}=\begin{pmatrix}
\smallskip \mathcal{S}_{\meas}(1,1) & \mathcal{S}_{\meas}(1,2) & \cdots & \mathcal{S}_{\meas}(1,N-1) & \mathcal{S}_{\meas}(1,N) \\
\mathcal{S}_{\meas}(2,1) & \mathcal{S}_{\meas}(2,2) & \cdots & \mathcal{S}_{\meas}(1,N-1) & \mathcal{S}_{\meas}(2,N) \\
\smallskip \vdots & \vdots & \ddots & \vdots & \vdots \\
\mathcal{S}_{\meas}(N,1) & \mathcal{S}_{\meas}(N,2) & \cdots & \mathcal{S}_{\meas}(N,N-1) & \mathcal{S}_{\meas}(N,N)\end{pmatrix}.\]
However, under a certain simulation configuration, it is impossible to measure $\mathcal{S}_{\meas}(n,m)$ when $n=m$ because when an antenna $\Lambda_{m}$ is used for signal transmission, the remaining $N-1$ antennas $\Lambda_n$, $n=1,2,\cdots,N$ with $n\ne m$ are used for signal reception. For more discussion on this, refer to \cite{SKLKLJC}. Thus, in practice, we can use the following scattering matrix
\[\mathbb{K}=\begin{pmatrix}
\smallskip \text{unknown} & \mathcal{S}_{\meas}(1,2) & \cdots & \mathcal{S}_{\meas}(1,N-1) & \mathcal{S}_{\meas}(1,N) \\
\mathcal{S}_{\meas}(2,1) & \text{unknown} & \cdots & \mathcal{S}_{\meas}(1,N-1) & \mathcal{S}_{\meas}(2,N) \\
\smallskip \vdots & \vdots & \ddots & \vdots & \vdots \\
\mathcal{S}_{\meas}(N,1) & \mathcal{S}_{\meas}(N,2) & \cdots & \mathcal{S}_{\meas}(N,N-1) & \text{unknown}\end{pmatrix}.\]
and correspondingly, traditional SM cannot be applied directly. Instead of using $\mathbb{K}$, we consider the application of the following scattering matrix. For a constant $C\in\mathbb{C}$, let
\[\mathbb{K}(C)=\begin{pmatrix}
\smallskip C & \mathcal{S}_{\meas}(1,2) & \cdots & \mathcal{S}_{\meas}(1,N-1) & \mathcal{S}_{\meas}(1,N) \\
\mathcal{S}_{\meas}(2,1) & C & \cdots & \mathcal{S}_{\meas}(1,N-1) & \mathcal{S}_{\meas}(2,N) \\
\smallskip \vdots & \vdots & \ddots & \vdots & \vdots \\
\mathcal{S}_{\meas}(N,1) & \mathcal{S}_{\meas}(N,2) & \cdots & \mathcal{S}_{\meas}(N,N-1) & C\end{pmatrix}.\]
Then, for each search point $\ms\in\Omega$, based on the \eqref{Approximation} and $\mathbb{K}(C)$, define a unit test vector
\begin{equation}\label{TestVector}
\mW(\ms)=\frac{1}{\displaystyle\left(\sum_{n=1}^{N}|\E(\ma_n,\ms)|^2\right)^{1/2}}\begin{pmatrix}
\medskip\E(\ma_1,\ms)\\
\E(\ma_2,\ms)\\
\vdots\\
\E(\ma_N,\ms)
\end{pmatrix}
\end{equation}
and by performing SVD of $\mathbb{K}(C)$,
\[\mathbb{K}(C)=\sum_{n=1}^{N}\tau_n\mU_n\mV_n^*\approx\tau_1\mU_1\mV_1^*,\]
the imaging function can be introduced as
\begin{equation}\label{ImagingFunction}
\mathfrak{F}(\ms,C)=|\langle\mW(\ms),\mU_1\rangle\langle\mW(\ms),\overline{\mV}_1\rangle|,\quad\text{where}\quad\langle\mW(\ms),\mU_1\rangle=\mW(\ms)^*\mU_1.
\end{equation}
Then, for some specific selection of $C$, it is possible to recognize $\mx\in\Sigma$ through the map of $\mathfrak{F}(\ms,C)$.

\section{Analysis of the Imaging Function}\label{sec:3}
In this section, we explore the mathematical structure of the imaging function for a proper selection of the constant $C$ to guarantee a good result. The main result is as follows.

\begin{theorem}\label{TheoremStructure}
Let $\vt_n=\ma_n/R=(\cos\theta_n,\sin\theta_n)$, $\ms=|\ms|(\cos\varphi,\sin\varphi)$, and $\ms-\mx=|\ms-\mx|(\cos\phi,\sin\phi)$. If $|k(\ms-\ma_n)|\gg0.25$ for $\ms\in\Omega$, then $\mathfrak{F}(\ms,C)$ can be represented as follows:
\begin{equation}\label{StructureImagingFunction}
\mathfrak{F}(\ms,C)=|\Phi_1(\ms)+\Phi_2(\ms)+\Phi_3(\ms,C)|,
\end{equation}
where
\begin{align*}
\Phi_1(\ms)&=\frac{N\omega\epsb e^{2iR\kb}\mathcal{O}_D}{32R\kb\tau_1\pi}\int_D\bigg(J_0(\kb|\ms-\mx|)+\frac{1}{N}\sum_{n=1}^{N}\sum_{s\in\mathbb{Z}_0^*}i^sJ_s(\kb|\ms-\mx|)e^{is(\theta_n-\phi)}\bigg)^2\rd\mx\\
\Phi_2(\ms)&=-\frac{\omega\epsb e^{2iR\kb}\mathcal{O}_D}{32R\kb\tau_1\pi}\int_D\bigg(J_0(2\kb|\ms-\mx|)+\frac{1}{N}\sum_{n=1}^{N}\sum_{s\in\mathbb{Z}_0^*}i^sJ_s(2\kb|\ms-\mx|)e^{is(\theta_n-\phi)}\bigg)\rd\mx\\
\Phi_3(\ms,C)&=\frac{C}{\tau_1}\bigg(J_0(2\kb|\ms|)+\frac{1}{N}\sum_{n=1}^{N}\sum_{s\in\mathbb{Z}_0^*}i^sJ_s(2\kb|\ms|)e^{is\varphi}\bigg).
\end{align*}
Here, $\mathbb{Z}_0^*=\mathbb{Z}\cup\set{-\infty,+\infty}\backslash\set{0}$ and $\mathbb{Z}$ denotes the set of integer number and $J_s$ is the Bessel function of order $s$ of the first kind.
\end{theorem}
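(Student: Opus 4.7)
The plan is to turn the absolute value $\mathfrak{F}(\ms,C)$ into a concrete quadratic form in $\mathbb{K}(C)$ and then expand that form entry-by-entry. Since $\mathbb{K}(C)\approx\tau_1\mU_1\mV_1^*$ and $\langle\mW,\overline{\mV}_1\rangle=\overline{\mV_1^T\mW}$, one checks directly that
\[
\mW(\ms)^*\,\mathbb{K}(C)\,\overline{\mW(\ms)}\;\approx\;\tau_1\,\langle\mW(\ms),\mU_1\rangle\,\langle\mW(\ms),\overline{\mV}_1\rangle,
\]
so $\tau_1\mathfrak{F}(\ms,C)\approx|\mW^*\mathbb{K}(C)\overline{\mW}|$ and the whole task reduces to unfolding the right-hand side.

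I would split
\[
\mW^*\mathbb{K}(C)\overline{\mW}\;=\;\sum_{n\ne m}\overline{W_n W_m}\,\mathcal{S}_{\meas}(n,m)\;+\;C\sum_{n=1}^{N}\overline{W_n}^{\,2},
\]
insert the Born approximation \eqref{Approximation} into each off-diagonal term, and apply the Hankel far-field $H_0^{(1)}(\kb r)\sim\sqrt{2/(\pi\kb r)}\,e^{i(\kb r-\pi/4)}$ together with $|\ma_n-\mx|\approx R-\vt_n\cdot\mx$; both are licensed by the hypothesis $|\kb(\ms-\ma_n)|\gg 0.25$. Setting $\alpha_R:=-\tfrac{i}{4}\sqrt{2/(\pi\kb R)}\,e^{i(\kb R-\pi/4)}$, one obtains $\E(\ma_n,\mx)\approx\alpha_R e^{-i\kb\vt_n\cdot\mx}$ and $W_n\approx(\alpha_R/\sqrt{N}|\alpha_R|)\,e^{-i\kb\vt_n\cdot\ms}$; using $|\alpha_R|^2=1/(8\pi\kb R)$ and $k_0^2/(\omega\mub)=\omega\epsb$ collapses all prefactors in the off-diagonal summand to an $n,m$-independent constant times $e^{i\kb(\vt_n+\vt_m)\cdot(\ms-\mx)}$. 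Completing the incomplete sum via
\[
\sum_{n\ne m}e^{i\kb(\vt_n+\vt_m)\cdot(\ms-\mx)}\;=\;\Bigl(\sum_{n=1}^{N}e^{i\kb\vt_n\cdot(\ms-\mx)}\Bigr)^{\!2}\;-\;\sum_{n=1}^{N}e^{2i\kb\vt_n\cdot(\ms-\mx)}
\]
then isolates the two patterns that will become $\Phi_1$ and $\Phi_2$, while the $C$-term reduces to a unimodular constant times $\sum_n e^{2i\kb\vt_n\cdot\ms}$ and supplies $\Phi_3$.

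The last step is to Jacobi--Anger-expand each of the three angular sums: applying $e^{iz\cos\theta}=\sum_{s\in\mathbb{Z}}i^s J_s(z)e^{is\theta}$ with $\vt_n\cdot(\ms-\mx)=|\ms-\mx|\cos(\theta_n-\phi)$ and $\vt_n\cdot\ms=|\ms|\cos(\theta_n-\varphi)$, and separating the $s=0$ and $s\in\mathbb{Z}_0^*$ contributions for each of the arguments $z=\kb|\ms-\mx|$, $2\kb|\ms-\mx|$, $2\kb|\ms|$, reproduces exactly the bracketed Bessel series inside $\Phi_1$, $\Phi_2$ and $\Phi_3$; dividing by $\tau_1$ yields the stated prefactors. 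The main obstacle I expect is keeping the phases straight: $\alpha_R^{\,2}$ and $\overline{\alpha_R^{\,2}}$ each carry a factor $e^{\pm 2i\kb R}$, and only the specific combination $\mW^*\cdot\overline{\mW}$ (as opposed to $\mW^T\cdot\mW$) places the conjugates on the correct sides so that every exponent becomes $\ms-\mx$ instead of $\ms+\mx$, yielding the Bessel argument $|\ms-\mx|$ demanded by the statement; an overall unimodular factor of the form $ie^{2i\kb R}$ shared by the three pieces is harmlessly absorbed inside the outer absolute value defining $\mathfrak{F}(\ms,C)$. A secondary, more routine point is justifying the termwise interchange of $\int_D$ with the Jacobi--Anger series, which follows from its uniform convergence on the bounded set $D$.
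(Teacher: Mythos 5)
Your proposal follows essentially the same route as the paper's proof: replacing $\mU_1\mV_1^*$ by $\tau_1^{-1}\mathbb{K}(C)$, inserting the far-field asymptotics of $H_0^{(1)}$ into both the test vector and the Born-approximated data, completing the off-diagonal double sum via $\sum_{n\ne m}=\bigl(\sum_n\bigr)\bigl(\sum_m\bigr)-\sum_{n=m}$ (the paper does this row by row, you do it in one step), and finishing with the Jacobi--Anger expansion of the three resulting angular sums. Your explicit bookkeeping of the unimodular prefactors absorbed by the outer absolute value is a little more careful than the paper's, but the argument is the same.
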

\begin{proof}
Since $\ms\in\Omega$ and $|k(\ms-\ma_n)|\gg0.25$ for all $n$, the Hankel function can be given as the following asymptotic form (see \cite[Theorem 2.5]{CK} for instance)
\[H_0^{(1)}(\kb|\ms-\ma_n|)\approx\frac{(1+i)e^{i\kb|\ma_n|}}{\sqrt{\kb\pi|\ma_n|}}e^{-i\kb\vt_n\cdot\ms}.\]
By using this representation, the test vector of \eqref{TestVector} and the measurement data \eqref{Approximation} can be written as
\[\mW(\ms)=\frac{1}{\sqrt{N}}\bigg(e^{-i\kb\vt_1\cdot\ms},e^{-i\kb\vt_2\cdot\ms},\ldots,e^{-i\kb\vt_N\cdot\ms}\bigg)^T\]
and 
\[\mathcal{S}_{\meas}(n,m)\approx-\frac{k_0^2e^{2i|\ma_n|\kb}\mathcal{O}_D}{32|\ma_n|\kb\omega\mub\pi}\int_De^{-i\kb(\vt_m+\vt_n)\cdot\mx}\rd\mx:=A\int_D e^{-i\kb(\vt_m+\vt_n)\cdot\mx}\rd\mx,\]
respectively. Since
\[\mU_1\mV_1^*=\frac{1}{\tau_1}\mathbb{K}(C)=\frac{1}{\tau_1}\begin{pmatrix}
C & \mathcal{S}_{\meas}(1,2) & \cdots & \mathcal{S}_{\meas}(1,N-1) & \mathcal{S}_{\meas}(1,N) \\
\mathcal{S}_{\meas}(2,1) & C & \cdots & \mathcal{S}_{\meas}(1,N-1) & \mathcal{S}_{\meas}(2,N) \\
\vdots & \vdots & \ddots & \vdots & \vdots \\
\mathcal{S}_{\meas}(N,1) & \mathcal{S}_{\meas}(N,2) & \cdots & \mathcal{S}_{\meas}(N,N-1) & C\end{pmatrix},\]
we can examine that
\begin{align*}
\mW(\ms)^*\mU_1\mV_1^*&=\frac{1}{\tau_1\sqrt{N}}\begin{pmatrix}
\smallskip e^{i\kb\vt_1\cdot\ms}\\
e^{i\kb\vt_2\cdot\ms}\\
\medskip\vdots\\
e^{i\kb\vt_N\cdot\ms}
\end{pmatrix}^T\begin{pmatrix}
\smallskip C & \mathcal{S}_{\meas}(1,2) & \cdots & \mathcal{S}_{\meas}(1,N-1) & \mathcal{S}_{\meas}(1,N) \\
\mathcal{S}_{\meas}(2,1) & C & \cdots & \mathcal{S}_{\meas}(1,N-1) & \mathcal{S}_{\meas}(2,N) \\
\smallskip\vdots & \vdots & \ddots & \vdots & \vdots \\
\mathcal{S}_{\meas}(N,1) & \mathcal{S}_{\meas}(N,2) & \cdots & \mathcal{S}_{\meas}(N,N-1) & C\end{pmatrix}\\
&=\frac{1}{\tau_1\sqrt{N}}
\begin{pmatrix}
\displaystyle\medskip Ce^{i\kb\vt_1\cdot\ms}+A\int_De^{-i\kb\vt_1\cdot\mx}\sum_{n\in\mathcal{N}_1}e^{i\kb\vt_n\cdot(\ms-\mx)}\rd\mx\\
\displaystyle Ce^{i\kb\vt_2\cdot\ms}+A\int_De^{-i\kb\vt_2\cdot\mx}\sum_{n\in\mathcal{N}_2}e^{i\kb\vt_n\cdot(\ms-\mx)}\rd\mx\\
\medskip\vdots\\
\displaystyle Ce^{i\kb\vt_N\cdot\ms}+A\int_De^{-i\kb\vt_N\cdot\mx}\sum_{n\in\mathcal{N}_N}e^{i\kb\vt_N\cdot(\ms-\mx)}\rd\mx
\end{pmatrix}^T,
\end{align*}
where $\mathcal{N}_n=\set{1,2,\cdots,N}\backslash\set{n}$.

Since the following relation holds uniformly (see \cite{P-SUB3} for derivation)
\begin{align}
\begin{aligned}\label{JacobiAnger}
\sum_{n=1}^{N}e^{i\kb\vt_n\cdot(\ms-\mx)}&=\sum_{n=1}^{N}e^{i\kb|\ms-\mx|\cos(\theta_n-\phi)}=\sum_{n=1}^{N}\bigg(J_0(\kb|\ms-\mx|)+\sum_{s\in\mathbb{Z}_0^*}i^sJ_s(\kb|\ms-\mx|)e^{is(\theta_n-\phi)}\bigg)\\
&=NJ_0(\kb|\ms-\mx|)+\sum_{n=1}^{N}\sum_{s\in\mathbb{Z}_0^*}i^sJ_s(\kb|\ms-\mx|)e^{is(\theta_n-\phi)}:=\mathcal{J}(\mx,\ms),
\end{aligned}
\end{align}
we can obtain
\begin{align*}
\mW(\ms)^*\mU_1\mV_1^*&=\frac{1}{\tau_1\sqrt{N}}
\begin{pmatrix}
\displaystyle\medskip Ce^{i\kb\vt_1\cdot\ms}+A\int_De^{-i\kb\vt_1\cdot\mx}\sum_{n=1}^{N}e^{i\kb\vt_n\cdot(\ms-\mx)}\rd\mx-A\int_De^{-i\kb\vt_1\cdot\mx}e^{i\kb\vt_1\cdot(\ms-\mx)}\rd\mx\\
\displaystyle Ce^{i\kb\vt_2\cdot\ms}+A\int_De^{-i\kb\vt_2\cdot\mx}\sum_{n=1}^{N}e^{i\kb\vt_n\cdot(\ms-\mx)}\rd\mx-A\int_De^{-i\kb\vt_2\cdot\mx}e^{i\kb\vt_2\cdot(\ms-\mx)}\rd\mx\\\
\medskip\vdots\\
\displaystyle Ce^{i\kb\vt_N\cdot\ms}+A\int_De^{-i\kb\vt_N\cdot\mx}\sum_{n=1}^{N}e^{i\kb\vt_n\cdot(\ms-\mx)}\rd\mx-A\int_De^{-i\kb\vt_N\cdot\mx}e^{i\kb\vt_N\cdot(\ms-\mx)}\rd\mx\
\end{pmatrix}^T\\
&=\frac{1}{\tau_1\sqrt{N}}\begin{pmatrix}
\displaystyle\medskip Ce^{i\kb\vt_1\cdot\ms}+A\int_De^{-i\kb\vt_1\cdot\mx}\mathcal{J}(\mx,\ms)\rd\mx-A\int_De^{-i\kb\vt_1\cdot\mx}e^{i\kb\vt_1\cdot(\ms-\mx)}\rd\mx\\
\displaystyle Ce^{i\kb\vt_2\cdot\ms}+A\int_De^{-i\kb\vt_2\cdot\mx}\mathcal{J}(\mx,\ms)\rd\mx-A\int_De^{-i\kb\vt_2\cdot\mx}e^{i\kb\vt_2\cdot(\ms-\mx)}\rd\mx\\\
\medskip\vdots\\
\displaystyle Ce^{i\kb\vt_N\cdot\ms}+A\int_De^{-i\kb\vt_N\cdot\mx}\mathcal{J}(\mx,\ms)\rd\mx-A\int_De^{-i\kb\vt_N\cdot\mx}e^{i\kb\vt_N\cdot(\ms-\mx)}\rd\mx\
\end{pmatrix}^T
\end{align*}
and correspondingly,
\begin{align*}
&\mW(\ms)^*\mU_1\mV_1^*\overline{\mW}(\ms)\\
&=\frac{1}{\tau_1 N}
\begin{pmatrix}
\displaystyle\medskip Ce^{i\kb\vt_1\cdot\ms}+A\int_De^{-i\kb\vt_1\cdot\mx}\mathcal{J}(\mx,\ms)\rd\mx-A\int_De^{-i\kb\vt_1\cdot\mx}e^{i\kb\vt_1\cdot(\ms-\mx)}\rd\mx\\
\displaystyle Ce^{i\kb\vt_2\cdot\ms}+A\int_De^{-i\kb\vt_2\cdot\mx}\mathcal{J}(\mx,\ms)\rd\mx-A\int_De^{-i\kb\vt_2\cdot\mx}e^{i\kb\vt_2\cdot(\ms-\mx)}\rd\mx\\\
\medskip\vdots\\
\displaystyle Ce^{i\kb\vt_N\cdot\ms}+A\int_De^{-i\kb\vt_1\cdot\mx}\mathcal{J}(\mx,\ms)\rd\mx-A\int_De^{-i\kb\vt_N\cdot\mx}e^{i\kb\vt_N\cdot(\ms-\mx)}\rd\mx\
\end{pmatrix}^T
\begin{pmatrix}
\smallskip e^{i\kb\vt_1\cdot\ms}\\
e^{i\kb\vt_2\cdot\ms}\\
\medskip\vdots\\
e^{i\kb\vt_N\cdot\ms}
\end{pmatrix}\\
&=\frac{1}{\tau_1 N}\sum_{n=1}^{N}\left(Ce^{2i\kb\vt_n\cdot\ms}+A\int_De^{i\kb\vt_n\cdot(\ms-\mx)}\mathcal{J}(\mx,\ms)\rd\mx-A\int_De^{2i\kb\vt_n\cdot(\ms-\mx)}\rd\mx\right).
\end{align*}
Now, by applying \eqref{JacobiAnger} again, we obtain
\begin{align*}
&\langle\mW(\ms),\mU_1\rangle\langle\mW(\ms),\overline{\mV}_1\rangle=\mW(\ms)^*\mU_1\mV_1^*\overline{\mW}(\ms)\\
&=\frac{C}{\tau_1 N}\sum_{n=1}^{N}e^{2i\kb\vt_n\cdot\ms}+\frac{A}{\tau_1 N}\int_D\mathcal{J}(\mx,\ms)\sum_{n=1}^{N}e^{i\kb\vt_n\cdot(\ms-\mx)}\rd\mx-\frac{A}{\tau_1 N}\int_D\sum_{n=1}^{N}e^{2i\kb\vt_N\cdot(\ms-\mx)}\rd\mx\\
&=\frac{A}{\tau_1 N}\int_D\bigg(NJ_0(\kb|\ms-\mx|)+\sum_{n=1}^{N}\sum_{s\in\mathbb{Z}_0^*}i^sJ_s(\kb|\ms-\mx|)e^{is(\theta_n-\phi)}\bigg)^2\rd\mx\\
&-\frac{A}{\tau_1 N}\int_D\bigg(NJ_0(2\kb|\ms-\mx|)+\sum_{n=1}^{N}\sum_{s\in\mathbb{Z}_0^*}i^sJ_s(2\kb|\ms-\mx|)e^{is(\theta_n-\phi)}\bigg)\rd\mx\\
&+\frac{C}{\tau_1 N}\bigg(NJ_0(2\kb|\ms|)+\sum_{n=1}^{N}\sum_{s\in\mathbb{Z}_0^*}i^sJ_s(2\kb|\ms|)e^{is\varphi}\bigg).
\end{align*}
With this, we obtain the structure \eqref{StructureImagingFunction}.
\end{proof}

Based on Theorem \ref{TheoremStructure}, we can explain some properties of the imaging function and phenomena that can occur in simulation results.

\begin{remark}[Detectability of the object]\label{Remark1}
Since $\Phi_1(\ms)$ and $\Phi(\ms)$ contain the factors $J_0(\kb|\ms-\mx|)$ and $J_0(2\kb|\ms-\mx|)$, respectively, the map of $\mathfrak{F}(\ms,C)$ will contain peak of large magnitude when $\ms=\mx\in D$ so that it will be possible to recognize the location and shape of $D$.
\end{remark}

\begin{remark}[Impact of the constant $C$ and its best selection]\label{Remark2}
Since $\Phi_3(\ms,C)$ contains no information of $\mx\in D$, this factor contributes to generating several artifacts, and correspondingly, it will disturb the detection of $D$. Therefore, the application of nonzero constant $C$ will make it difficult to retrieve objects via the map of $\mathfrak{F}(\ms,C)$, so the selection $C=0$ will guarantee good imaging results. This is the theoretical reason why the diagonal elements of the scattering matrix are set to zero in most studies. In this case, since $J_0(0)=1$ and $\mathfrak{F}(\ms,C)\approx1$ when $\ms\in D$,
\[\mathfrak{F}(\ms,0)\approx\abs{\frac{N\omega\epsb e^{2iR\kb}\mathcal{O}_D}{32R\kb\tau_1\pi}\area(D)-\frac{\omega\epsb e^{2iR\kb}\mathcal{O}_D}{32R\kb\tau_1\pi}\area(D)}\approx 1.\]
Correspondingly, $\mathfrak{F}(\ms,0)$ can be represented as
\begin{multline*}
\mathfrak{F}(\ms,0)\approx\frac{N}{(N-1)\area(D)}\int_D\bigg(J_0(\kb|\ms-\mx|)+\frac{1}{N}\sum_{n=1}^{N}\sum_{s\in\mathbb{Z}_0^*}i^sJ_s(\kb|\ms-\mx|)e^{is(\theta_n-\phi)}\bigg)^2\rd\mx\\
-\frac{1}{(N-1)\area(D)}\int_D\bigg(J_0(2\kb|\ms-\mx|)+\frac{1}{N}\sum_{n=1}^{N}\sum_{s\in\mathbb{Z}_0^*}i^sJ_s(2\kb|\ms-\mx|)e^{is(\theta_n-\phi)}\bigg)\rd\mx.
\end{multline*}
This is the same result derived in \cite{P-SUB11}. Hence, we can say that Theorem \ref{TheoremStructure} can be regarded as a generalized version of the previous study.
\end{remark}

\begin{remark}[Ideal conditions to obtain good results]\label{Remark3}
Based on Remarks \ref{Remark1} and \ref{Remark2}, we can obtain good results when $\Phi_3(\ms,C)=0$, i.e., $C=0$,
\begin{equation}\label{EliminateTerms}
\frac{1}{N}\sum_{n=1}^{N}\sum_{s\in\mathbb{Z}_0^*}i^sJ_s(\kb|\ms-\mx|)e^{is(\theta_n-\phi)}=0,\quad\text{and}\quad\frac{1}{N}\sum_{n=1}^{N}\sum_{s\in\mathbb{Z}_0^*}i^sJ_s(2\kb|\ms-\mx|)e^{is(\theta_n-\phi)}=0.
\end{equation}
The easiest and simplest method is to select $N\longrightarrow+\infty$; however, this approach is impossible considering the simulation circumstance. Since the following asymptotic form holds for large $z$
\[J_s(z)=\sqrt{\frac{2}{\pi z}}\cos\left(z-\frac{s\pi}{2}-\frac{\pi}{4}\right)\quad\text{implies}\quad J_s(\kb|\ms-\mx|)\propto\sqrt{\frac{2}{\kb\pi|\ms-\mx|}},\]
it will be possible to make $J_s(\kb|\ms-\mx|),J_s(2\kb|\ms-\mx|)\longrightarrow+\infty$ when $|\kb|\longrightarrow+\infty$ or equivalently $f\longrightarrow+\infty$. However, applying infinite-valued frequency is impossible in practice. Based on these observations, we can confirm that if the total number $N$ is small or the applied frequency $f$ is low, it will not be easy to identify the object.
\end{remark}

\begin{remark}[Practical condition to obtain good results]\label{Remark4} 
Here, we consider finding a condition that satisfies the following relation: for $x\ne0$ and sufficiently large $L$,
\[\mathcal{E}(x,L)=\sum_{n=1}^{N}\sum_{s=-L,s\ne0}^{L}i^sJ_s(x)e^{is(\theta_n-\phi)}\approx0.\]
Note that if $N=16$ and all antennas are uniformly distributed on a circle of radius $R$ such as the simulation configuration in Figure \ref{Simulation_Configration} of Section \ref{sec:4}, then $\mathcal{E}(x,L)\approx0$ for small $L$, refer to \cite{P-SUB16}. If $L$ is sufficiently large, then since (see \cite{L} for instance)
\[|J_L(x)|\leq\frac{b}{\sqrt[3]{L}},\quad b=0.674885\ldots\quad\text{implies}\quad\mathcal{E}(x,L)\approx0.\]
Table \ref{table} presents the values of $|\mathcal{E}(x,L)|$ with various $x$ and $L$. This is the theoretical reason why uniformly distributed antenna arrangement was used in most studies.
\end{remark}

\begin{table}[h]
\begin{center}
\begin{tabular}{c||c|c|c|c|c}
\hline \centering ~&$L=1$&$L=3$&$L=5$&$L=10$&$L\geq15$\\
\hline\centering $x=0.1$&$\SI{1.5266e-16}{}$&$\SI{1.3877e-16}{}$&$\SI{1.8049e-16}{}$&$\SI{2.2053e-16}{}$&$\SI{2.2053e-16}{}$\\
\hline\centering $x=0.3$&$\SI{3.3307e-16}{}$&$\SI{3.1772e-16}{}$&$\SI{4.8908e-16}{}$&$\SI{3.1406e-16}{}$&$\SI{3.0425e-16}{}$\\
\hline\centering $x=0.5$&$\SI{5.5511e-17}{}$&$\SI{4.6230e-16}{}$&$\SI{7.4906e-16}{}$&$\SI{7.9050e-16}{}$&$\SI{6.6733e-16}{}$\\
\hline\centering $x=0.7$&$\SI{3.3307e-16}{}$&$\SI{9.8275e-16}{}$&$\SI{5.5563e-16}{}$&$\SI{9.3646e-16}{}$&$\SI{7.0292e-16}{}$\\
\hline\centering $x=1.0$&$\SI{5.5511e-16}{}$&$\SI{2.1957e-16}{}$&$\SI{4.6286e-16}{}$&$\SI{1.5563e-16}{}$&$\SI{1.6795e-16}{}$\\
\hline
\end{tabular}
\caption{\label{table}Values of $|\mathcal{E}(x,L)|$ for various $x$ and $L$.}
\end{center}
\end{table}

Based on Remarks \ref{Remark1} and \ref{Remark4}, we can obtain the following unique identification result.

\begin{corollary}[Unique identification of object]
Assume that $N$ is an even number greater than $8$ and all antennas $\Lambda_n$, $n=1,2,\cdots,N$, are uniformly distributed on a circle of radius $R$. Then, for sufficiently high frequency $f$ and sufficiently small constant $|C|$, the object $D$ can be retrieved uniquely through the maps of $\mathfrak{F}(\ms,0)$ and $\mathfrak{F}(\ms,C)$.
\end{corollary}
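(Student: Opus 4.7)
The plan is to verify the three sufficient conditions of Remarks~\ref{Remark2}--\ref{Remark4} under the corollary's hypotheses and then deduce from Theorem~\ref{TheoremStructure} that $\mathfrak{F}(\ms,C)$ exhibits a localized, dominant peak at every point of $D$.

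First, I would substitute $\theta_n=2\pi(n-1)/N+\theta_0$ (any rigid rotation of the uniform configuration) into the angular sums appearing in $\Phi_1$, $\Phi_2$, and $\Phi_3$ and invoke the discrete orthogonality identity
\[
\frac{1}{N}\sum_{n=1}^{N}e^{is\theta_n}=\begin{cases}e^{is\theta_0}&\text{if }N\mid s,\\ 0&\text{otherwise.}\end{cases}
\]
Each double sum then collapses to a residual series of the form $\sum_{k\ne 0}i^{kN}J_{kN}(x)e^{-ikN(\phi-\theta_0)}$, whose smallest absolute order is $N\geq 10$ since $N$ is even and greater than $8$. The same reduction applies to $\Phi_3(\ms,C)$ with $x=2\kb|\ms|$ and $\phi$ replaced by $\varphi$.

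Second, I would bound these residual series using the two-regime Bessel estimate of Remark~\ref{Remark4}: the uniform inequality $|J_s(x)|\leq b/|s|^{1/3}$ together with the super-exponential decay $|J_s(x)|=O((ex/(2|s|))^{|s|})$ valid for $|s|>x$. Viewed through Jacobi--Anger, the residual series is precisely the trapezoidal-rule error for the $N$-point quadrature of $\frac{1}{2\pi}\int_0^{2\pi}e^{ix\cos\theta}\rd\theta=J_0(x)$, which for this analytic periodic integrand is exponentially small in $N$ as long as the lowest residual order $|s|=N$ exceeds the argument. Reading \emph{sufficiently high frequency} as the regime in which $|\kb|\diam(\Omega)<N$, the tail becomes uniformly negligible on $\Omega$, and $\Phi_1$ and $\Phi_2$ reduce, up to this small error, to the Bessel-zero integrals $\int_D J_0(\kb|\ms-\mx|)^2\rd\mx$ and $\int_D J_0(2\kb|\ms-\mx|)\rd\mx$, which peak sharply at $\ms\in D$ and decay like $(\kb|\ms-\mx|)^{-1/2}$ elsewhere. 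Third, the same two-regime bound yields $|\Phi_3(\ms,C)|\leq(|C|/|\tau_1|)(1+\eta)$ uniformly on $\Omega$, with $\eta$ the residual tail error controlled in the previous step. Choosing $|C|$ small enough pushes $\Phi_3$ below the peak-to-background contrast produced by $\Phi_1+\Phi_2$, and combining the three estimates shows that $\mathfrak{F}(\ms,C)$ approximates the Remark~\ref{Remark2} ideal expression uniformly in $\ms\in\Omega$, so its level set $\{\ms\in\Omega:\mathfrak{F}(\ms,C)\approx 1\}$ coincides with $D$ and yields unique identification.

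The main obstacle is the precise reading of \emph{sufficiently high frequency}: naively letting $|\kb|\to\infty$ with $N$ fixed eventually forces $\kb|\ms-\mx|$ past the threshold $N$, destroying the super-exponential cancellation at the lowest residual order $|s|=N$. The correct interpretation is the finite window $|\kb|\diam(\Omega)<N$ in which that lowest order still dominates the argument; identifying this window is how the ``high $f$'' and ``large $N$'' hypotheses are reconciled. Once this regime is fixed, the remaining estimates follow directly from the Bessel asymptotics of Remark~\ref{Remark4} and the dominant-peak argument of Remark~\ref{Remark1}.
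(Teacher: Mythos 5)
Your proposal is correct and follows the same route the paper takes: the paper offers no formal proof of this corollary, justifying it only by combining Theorem~\ref{TheoremStructure} with the peak argument of Remark~\ref{Remark1}, the smallness of $\Phi_3$ for small $|C|$ from Remark~\ref{Remark2}, and the vanishing of the residual angular sums for a uniform array from Remark~\ref{Remark4}, which is exactly the chain you execute (with the discrete orthogonality of the $N$-th roots of unity making Remark~\ref{Remark4} precise). The one place you genuinely go beyond the paper is the identification of the window $|\kb|\diam(\Omega)<N$ reconciling ``sufficiently high frequency'' with fixed $N$: the paper's Remark~3 glosses over this (and even asserts the wrong limit for $J_s(\kb|\ms-\mx|)$ as $|\kb|\to\infty$), whereas your observation that the lowest surviving order $|s|=N$ must exceed the argument for the trapezoidal-rule error to be super-exponentially small is the correct quantitative statement, so your version is strictly more careful than the source. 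The only slip is cosmetic: the squared integrand in $\Phi_1$ decays like $(\kb|\ms-\mx|)^{-1}$ off the peak, not $(\kb|\ms-\mx|)^{-1/2}$, which only strengthens the contrast estimate.
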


\section{Simulation Results with Synthetic and Experimental Data and Discussions}\label{sec:4}
\subsection{Simulation Results with Synthetic Data}
In this section, we present simulation results to demonstrate the theoretical result. To perform the simulation, we set the location of dipole antennas for $n=1,2,\cdots,N(=16)$, 
\[\ma_n=\SI{0.09}{\meter}\left(\cos\theta_n,\sin\theta_n\right),\quad\theta_n=\frac{3\pi}{2}-\frac{2\pi(n-1)}{N}.\]
The ROI $\Omega$ was selected as the interior of a square region $(-\SI{0.08}{\meter},\SI{0.08}{\meter})^2$ with material properties $(\epsb,\sigmab)=(20\eps_0,\SI{0.2}{\siemens/\meter})$ at $f=\SI{1}{\giga\hertz}$. Here, $\eps_0=\SI{8.854e-12}{\farad/\meter}$ denotes the vacuum permittivity. We selected two balls $D_1$ and $D_2$ as objects with the same radii $\alpha=\SI{0.01}{\meter}$, centers $\mx_1=(\SI{0.01}{\meter},\SI{0.03}{\meter})$ and $\mx_2=(-\SI{0.04}{\meter},-\SI{0.02}{\meter})$, and material properties $(\eps_1,\sigma_1)=(55\eps_0,\SI{1.2}{\siemens/\meter})$ and $(\eps_2,\sigma_2)=(45\eps_0,\SI{1.0}{\siemens/\meter})$. Figure \ref{Simulation_Configration} illustrates the simulation settings without and with objects.

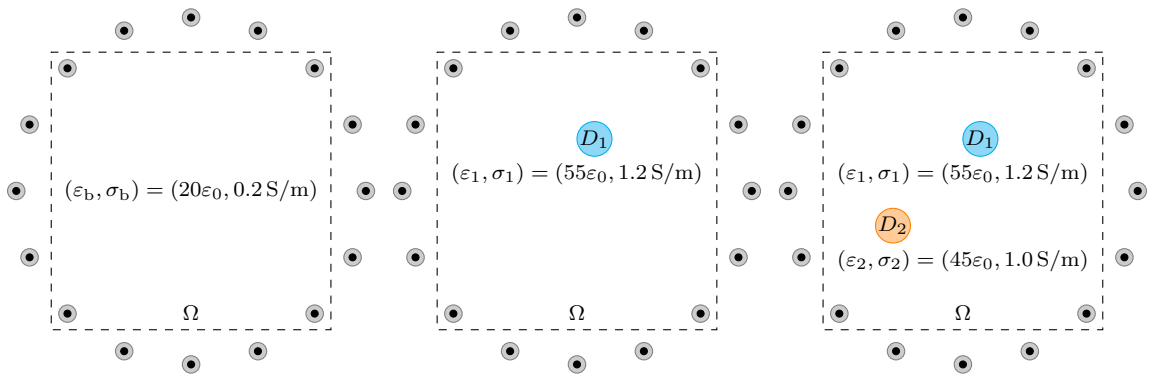
\begin{figure}[h]
\begin{center}
\begin{tikzpicture}[scale=2.3]
\draw[black,dashed] (0.8,0.8) -- (-0.8,0.8) -- (-0.8,-0.8) -- (0.8,-0.8) -- cycle;
\node at (0,-0.7) {\footnotesize$\Omega$};
\foreach \alpha in {0,22.5,...,337.5}
{\draw[gray,fill=gray!40!white] ({cos(\alpha)},{sin(\alpha)}) circle (0.05cm);
\draw[black,fill=black] ({cos(\alpha)},{sin(\alpha)}) circle (0.02cm);}
\node at (0,0) {\footnotesize$(\epsb,\sigmab)=(20\eps_0,\SI{0.2}{\siemens/\meter})$};
\end{tikzpicture}\hfill
\begin{tikzpicture}[scale=2.3]
\draw[black,dashed] (0.8,0.8) -- (-0.8,0.8) -- (-0.8,-0.8) -- (0.8,-0.8) -- cycle;
\node at (0,-0.7) {\footnotesize$\Omega$};
\foreach \alpha in {0,22.5,...,337.5}
{\draw[gray,fill=gray!40!white] ({cos(\alpha)},{sin(\alpha)}) circle (0.05cm);
\draw[black,fill=black] ({cos(\alpha)},{sin(\alpha)}) circle (0.02cm);}
\draw[cyan,fill=cyan!40!white] (0.1,0.3) circle (0.1cm);
\node at (0.1,0.3) {\footnotesize$D_1$};
\node at (0,0.1) {\footnotesize$(\eps_1,\sigma_1)=(55\eps_0,\SI{1.2}{\siemens/\meter})$};
\end{tikzpicture}\hfill
\begin{tikzpicture}[scale=2.3]
\draw[black,dashed] (0.8,0.8) -- (-0.8,0.8) -- (-0.8,-0.8) -- (0.8,-0.8) -- cycle;
\node at (0,-0.7) {\footnotesize$\Omega$};
\foreach \alpha in {0,22.5,...,337.5}
{\draw[gray,fill=gray!40!white] ({cos(\alpha)},{sin(\alpha)}) circle (0.05cm);
\draw[black,fill=black] ({cos(\alpha)},{sin(\alpha)}) circle (0.02cm);}
\draw[cyan,fill=cyan!40!white] (0.1,0.3) circle (0.1cm);
\node at (0.1,0.3) {\footnotesize$D_1$};
\node at (0,0.1) {\footnotesize$(\eps_1,\sigma_1)=(55\eps_0,\SI{1.2}{\siemens/\meter})$};
\draw[orange,fill=orange!40!white] (-0.4,-0.2) circle (0.1cm);
\node at (-0.4,-0.2) {\footnotesize$D_2$};
\node at (0,-0.4) {\footnotesize$(\eps_2,\sigma_2)=(45\eps_0,\SI{1.0}{\siemens/\meter})$};
\end{tikzpicture}
\caption{\label{Simulation_Configration} Illustration of the background (left), single (center) and multiple (right) small objects.}
\end{center}
\end{figure}

\begin{example}[Imaging of single object]\label{ex1}
In this example, we consider the imaging of a single object $D_1$. Figure \ref{SV1} shows the distribution of singular values of $\mathbb{K}(C)$ with various $C$. We observe that it is very easy to discriminate nonzero singular value $\tau_1$ when $C=0$, $0.01$, $0.001i$, and $0.01i$. However, if the value of $|C|$ is not close to zero, it is not easy to discriminate nonzero and close to zero singular values, e.g., $C=0.1$ and $C=0.1i$. Fortunately, by regarding differences $\tau_n-\tau_{n+1}$, $n=1,2,\cdots,15$, we selected $\tau_1$ as the nonzero singular value of $\mathbb{K}(C)$ and defined the imaging function $\mathfrak{F}(\ms,C)$ of \eqref{ImagingFunction}.

Figure \ref{Result1} shows the maps of $\mathfrak{F}(\ms,C)$ with various $C$. Note that if $C=0$ or $|C|$ is small ($|C|\leq0.01$), the location and shape of $D_1$ can be identified accurately through the map of $\mathfrak{F}(\ms,C)$. Meanwhile, it is not easy to distinguish $D_1$ and an artifact with large magnitude at $(-\SI{0.01}{\meter},-\SI{0.03}{\meter})$ if $C=0.1$. Although some artifacts are included in the imaging result, it is possible to identify $D_1$ because the magnitudes of artifacts are small when $C=0.1i$. Thus, we can conclude that the imaging result of $\mathfrak{F}(\ms,C)$ significantly depends on the choice of $C$. Additionally, good results can be obtained when $C=0$ or $|C|$ is sufficiently small ($|C|\leq0.01$ in this example) for identifying single object.
\end{example}

\begin{figure}[h]
\subfigure[$C=0$]{\includegraphics[width=0.33\textwidth]{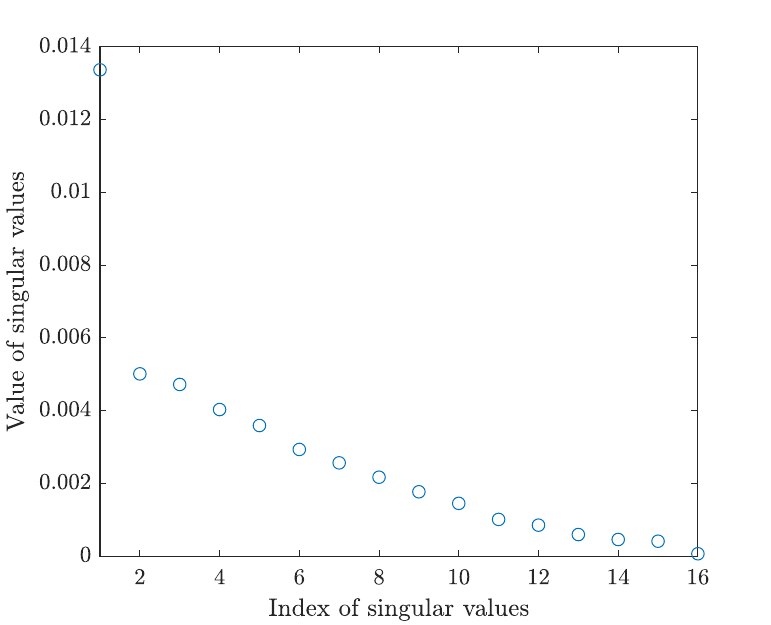}}\hfill
\subfigure[$C=0.01$]{\includegraphics[width=0.33\textwidth]{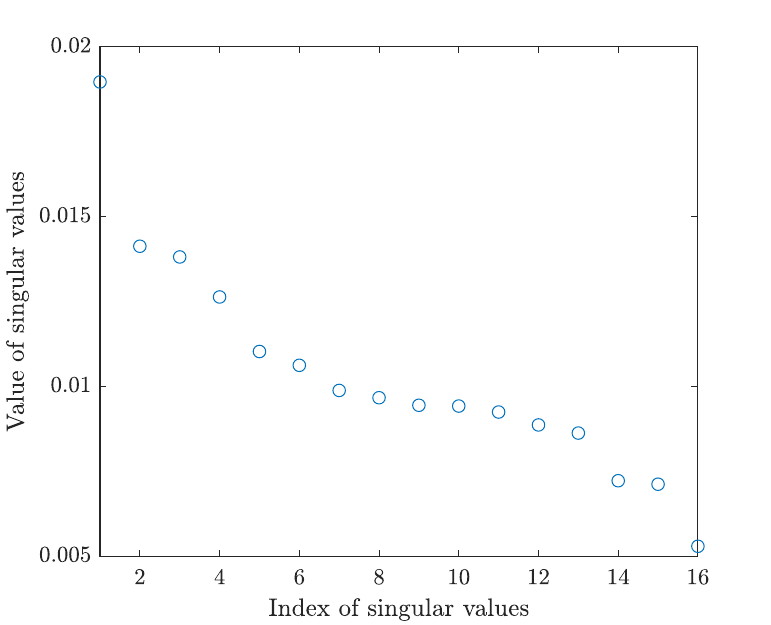}}\hfill
\subfigure[$C=0.1$]{\includegraphics[width=0.33\textwidth]{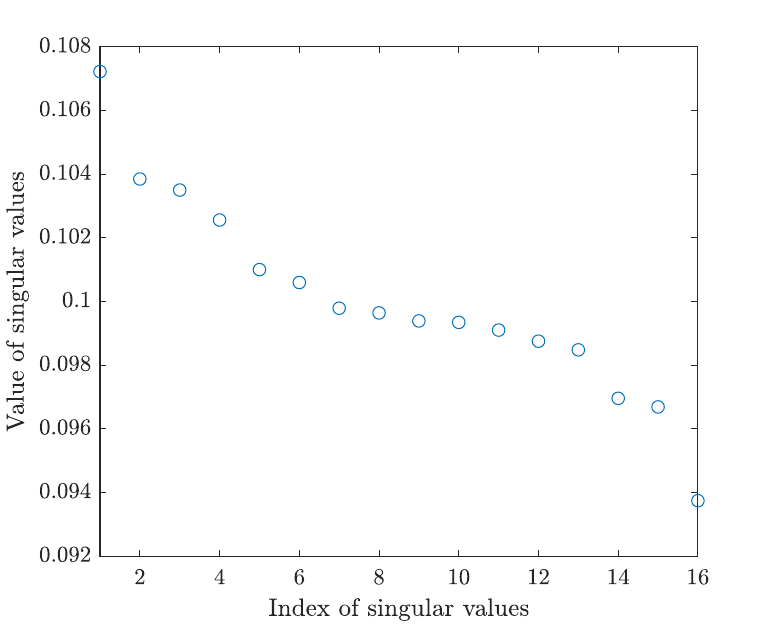}}\\
\subfigure[$C=0.001i$]{\includegraphics[width=0.33\textwidth]{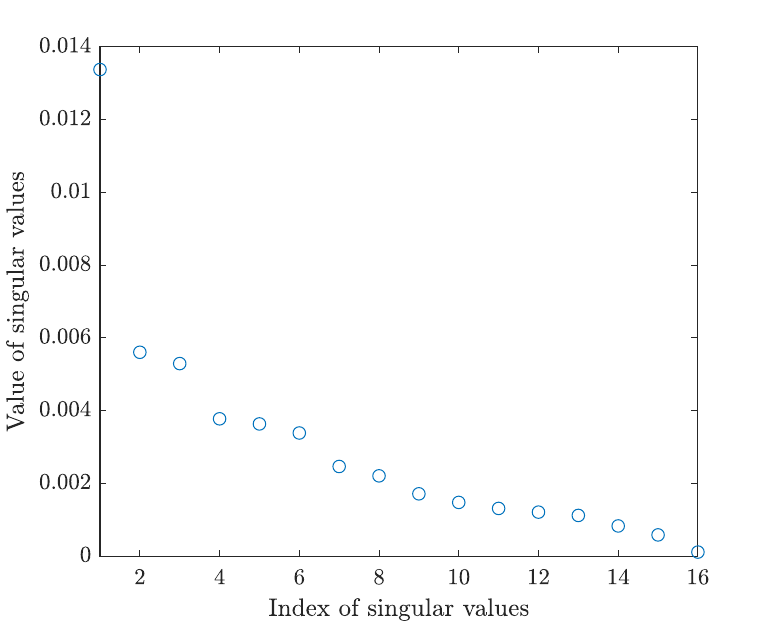}}\hfill
\subfigure[$C=0.01i$]{\includegraphics[width=0.33\textwidth]{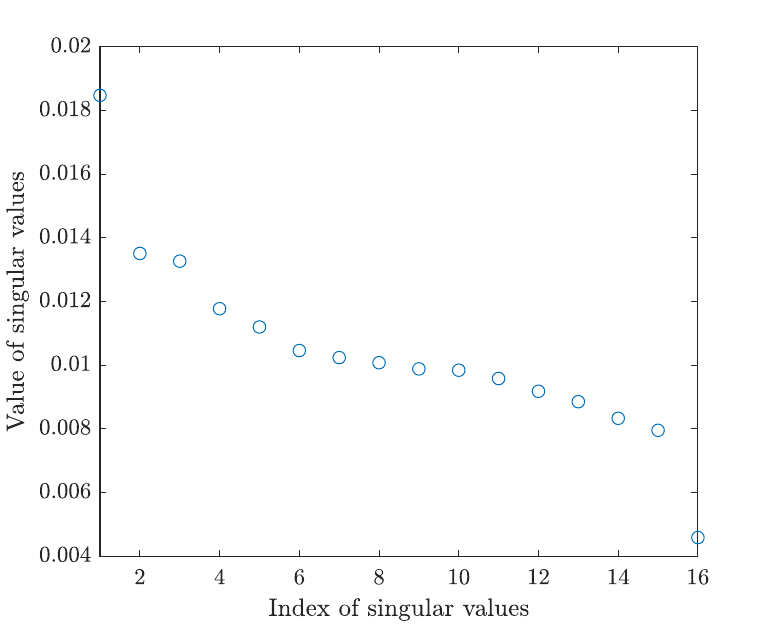}}\hfill
\subfigure[$C=0.1i$]{\includegraphics[width=0.33\textwidth]{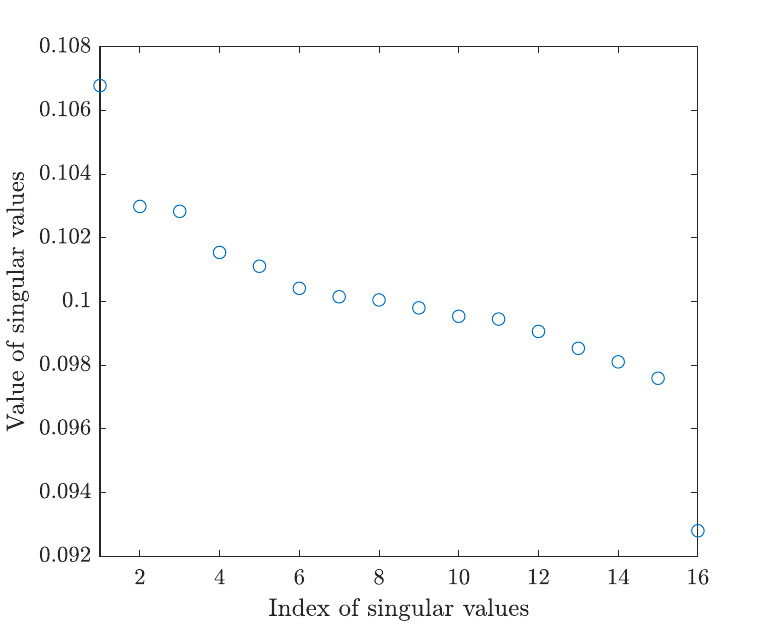}}
\caption{\label{SV1}(Example \ref{ex1}) Distribution of the singular values of $\mathbb{K}(C)$ at $f=\SI{1}{\giga\hertz}$ with various $C$.}
\end{figure}

\begin{figure}[h]
\subfigure[$C=0$]{\includegraphics[width=0.33\textwidth]{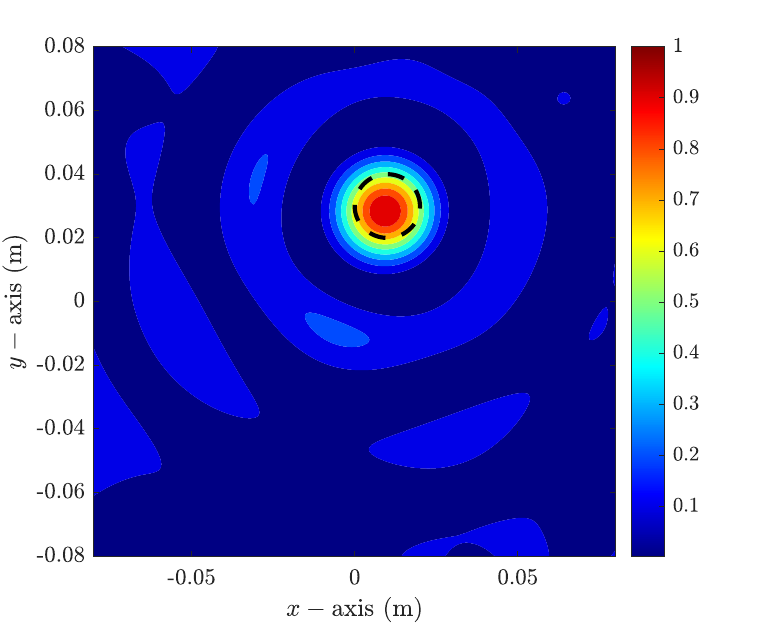}}\hfill
\subfigure[$C=0.01$]{\includegraphics[width=0.33\textwidth]{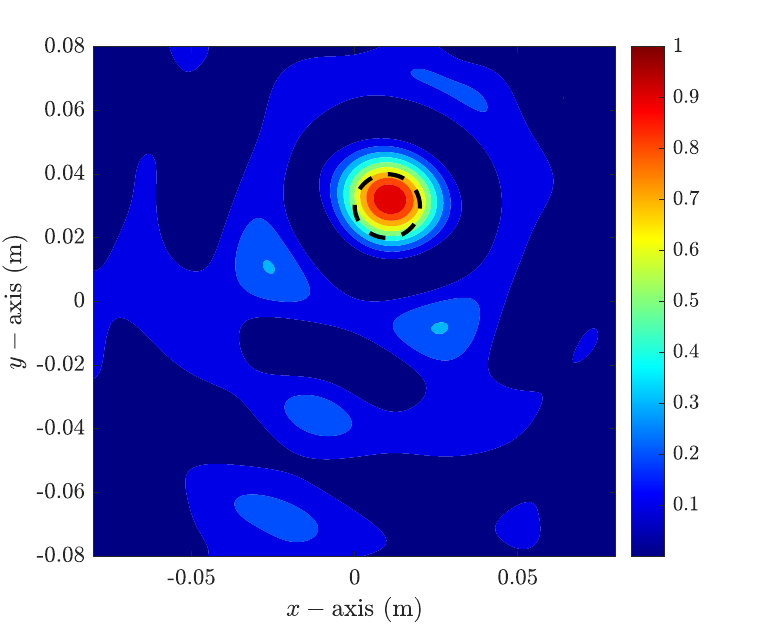}}\hfill
\subfigure[$C=0.1$]{\includegraphics[width=0.33\textwidth]{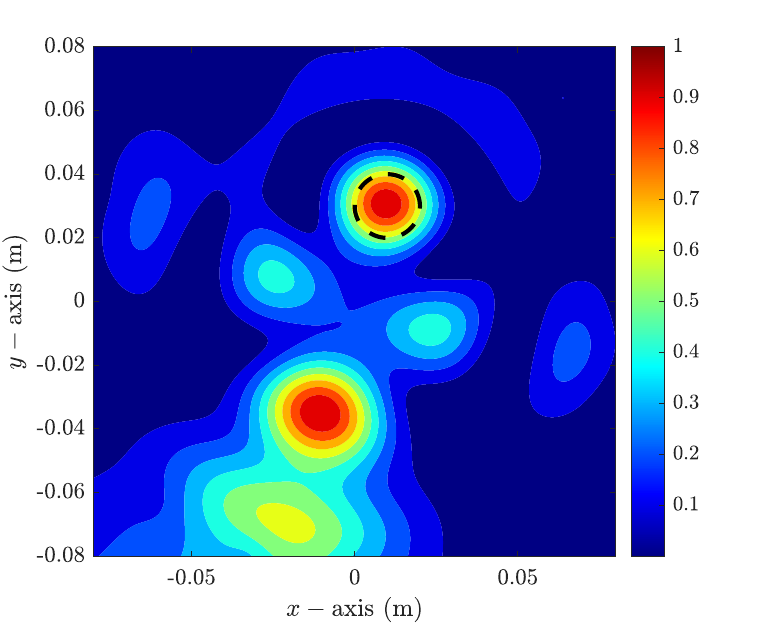}}\hfill\\
\subfigure[$C=0.001i$]{\includegraphics[width=0.33\textwidth]{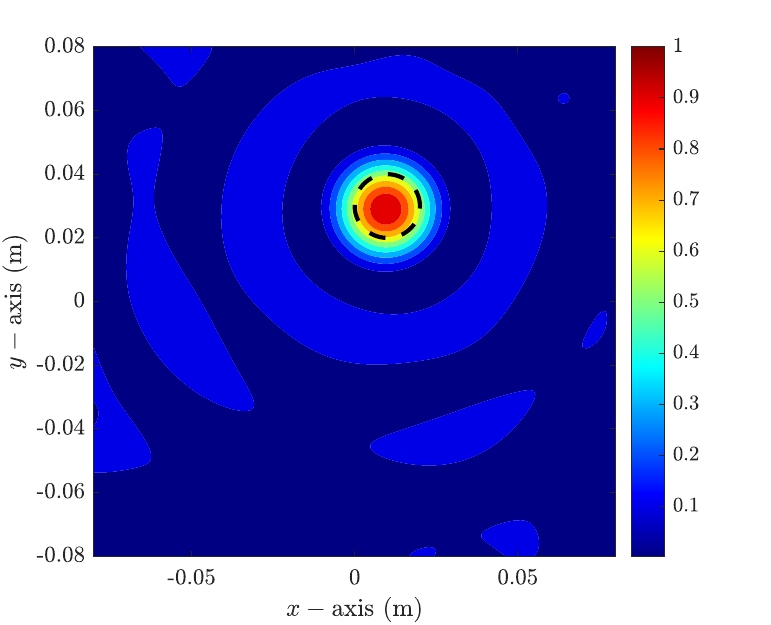}}\hfill
\subfigure[$C=0.01i$]{\includegraphics[width=0.33\textwidth]{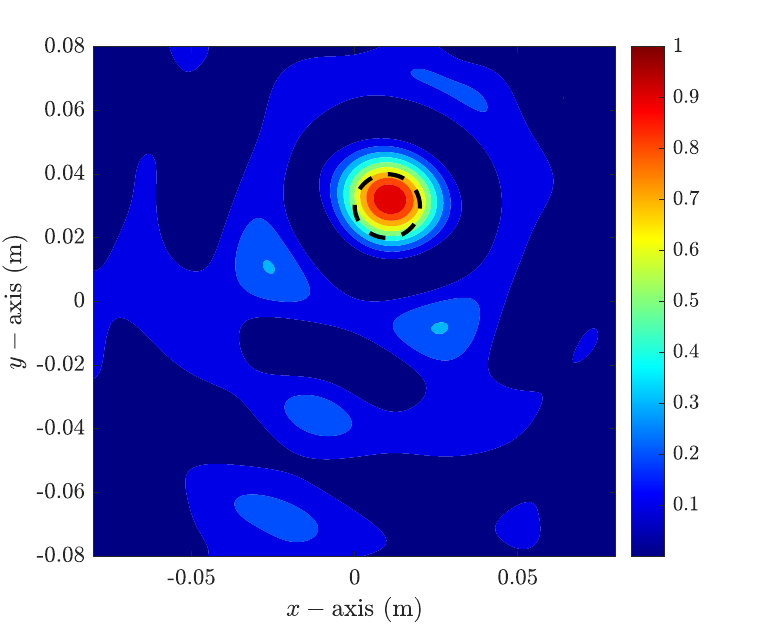}}\hfill
\subfigure[$C=0.1i$]{\includegraphics[width=0.33\textwidth]{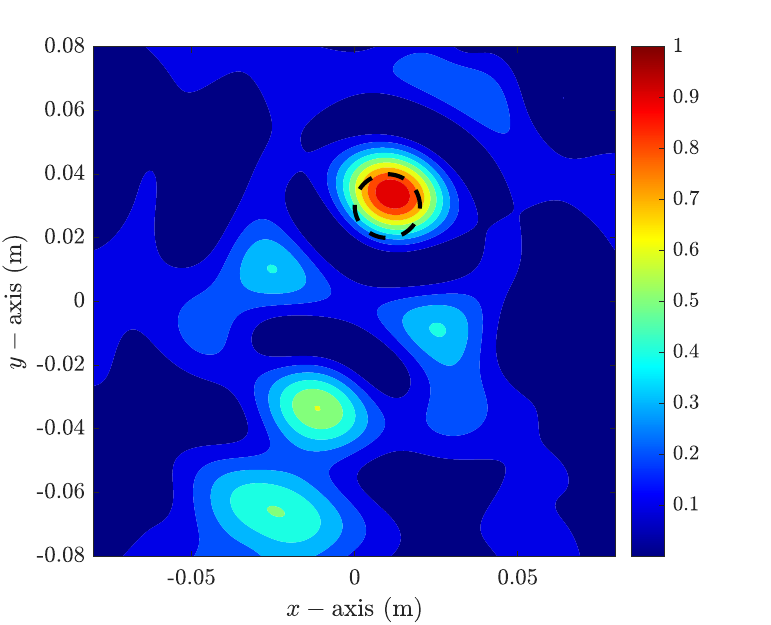}}\hfill
\caption{\label{Result1}(Example \ref{ex1}) Maps of $\mathfrak{F}(\ms,C)$ at $f=\SI{1}{\giga\hertz}$ with various $C$. Black-colored dashed line describes the $\p D_1$.}
\end{figure}

\begin{example}[Imaging of multiple objects]\label{ex2}
Now, we use the imaging function $\mathfrak{F}(\ms,C)$ to identify multiple small objects $D_1$ and $D_2$. In this case, since SVD of $\mathbb{K}(C)$ is
\[\mathbb{K}(C)=\sum_{n=1}^{N}\tau_n\mU_n\mV_n^*\approx\sum_{n=1}^{2}\tau_n\mU_n\mV_n^*,\]
the imaging function of \eqref{ImagingFunction} becomes
\[\mathfrak{F}(\ms,C)=\abs{\sum_{n=1}^{2}\langle\mW(\ms),\mU_n\rangle\langle\mW(\ms),\overline{\mV}_n\rangle}.\]

Figure \ref{SV2} shows the distribution of singular values of $\mathbb{K}(C)$ with various $C$. Here, it is possible to discriminate nonzero singular values $\tau_1$ and $\tau_2$ when $C=0$ and $C=0.001i$. However, if the value of $|C|$ is not close to zero, it is difficult to discriminate two nonzero singular values. To define the imaging function, we selected $\tau_1$ as the nonzero singular value of $\mathbb{K}(C)$ if $C=0.01$, $C=0.01i$, and $0.1i$. For $C=0.1$, we selected the first six $\tau_n$ as nonzero singular values.

Figure \ref{Result2} shows the maps of $\mathfrak{F}(\ms,C)$ with various $C$. Similar to the single object imaging, the locations and shapes of $D_1$ and $D_2$ can be identified accurately through the map of $\mathfrak{F}(\ms,C)$ if $C=0$, $0001i$, and $0.01i$. It is possible to recognize $D_1$ and $D_2$, but several artifacts with small magnitudes are included in the imaging result if $C=0.01i$ and $C=0.1i$. Unfortunately, it is difficult to distinguish $D_1\cup D_2$ and several artifacts with large magnitudes if $C=0.1$. Therefore, similar to the single object imaging, the imaging result of $\mathfrak{F}(\ms,C)$ significantly depends on the choice of $C$. Additionally, good results can be obtained when $C=0$ or $|C|$ is sufficiently small for identifying multiple objects.
\end{example}

\begin{figure}[h]
\subfigure[$C=0$]{\includegraphics[width=0.33\textwidth]{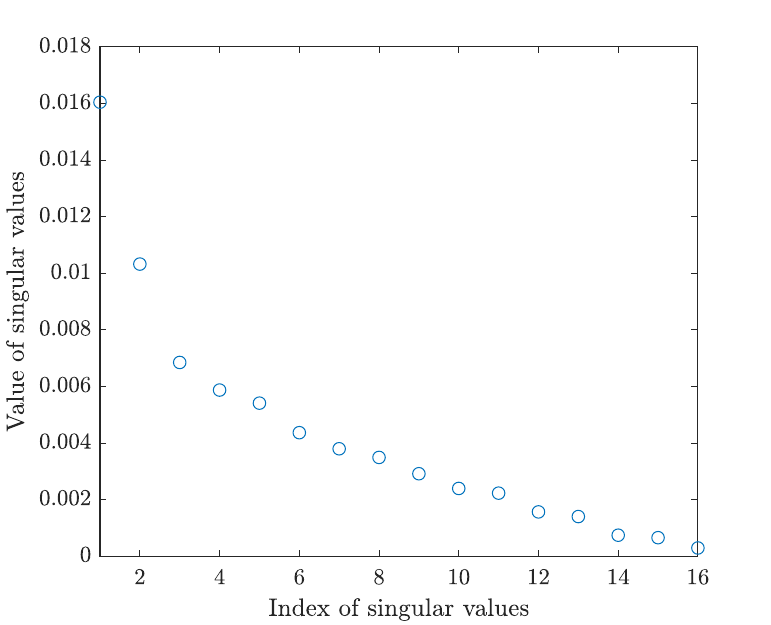}}\hfill
\subfigure[$C=0.01$]{\includegraphics[width=0.33\textwidth]{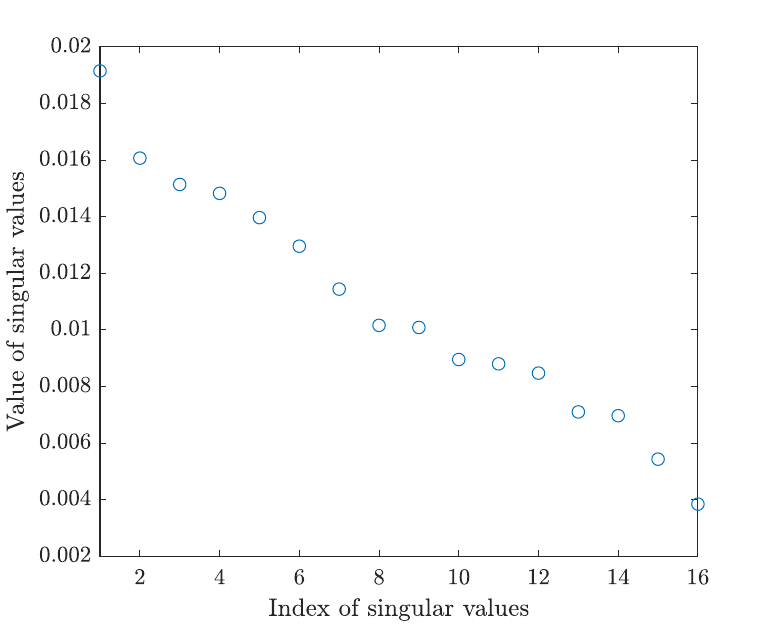}}\hfill
\subfigure[$C=0.1$]{\includegraphics[width=0.33\textwidth]{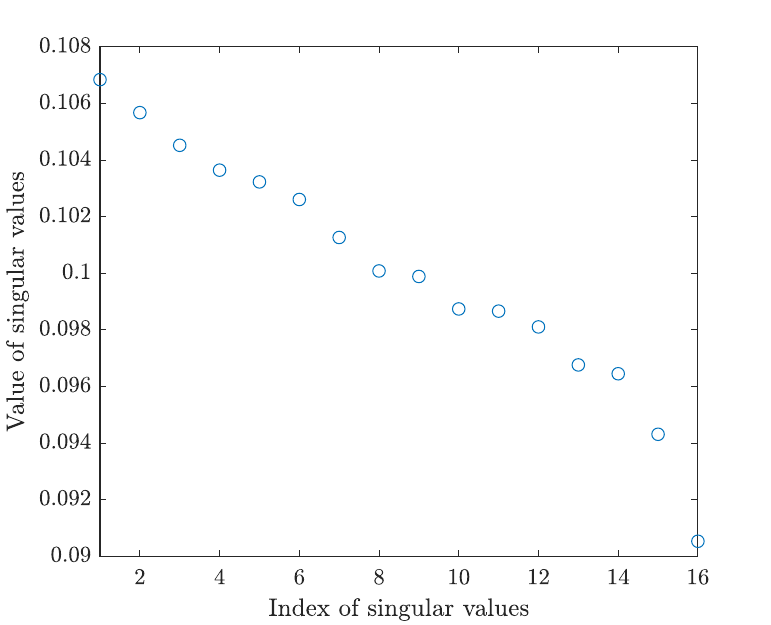}}\\
\subfigure[$C=0.001i$]{\includegraphics[width=0.33\textwidth]{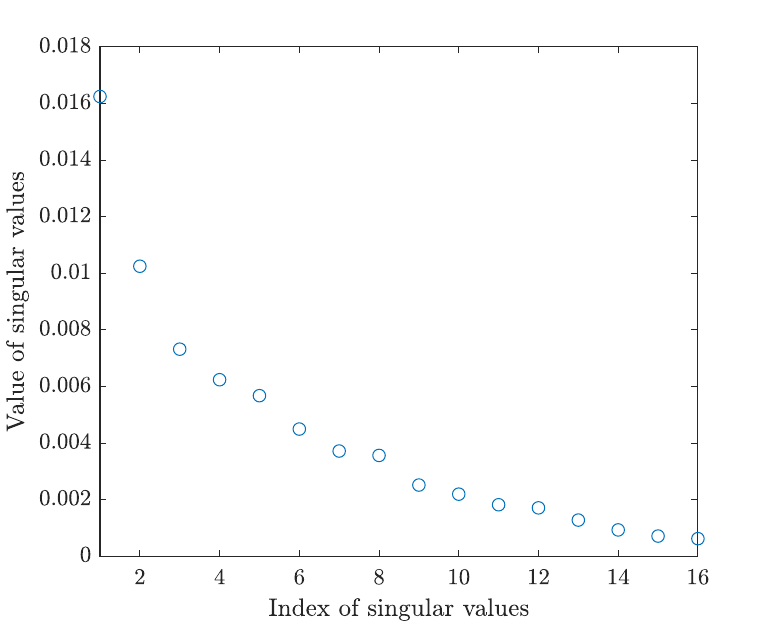}}\hfill
\subfigure[$C=0.01i$]{\includegraphics[width=0.33\textwidth]{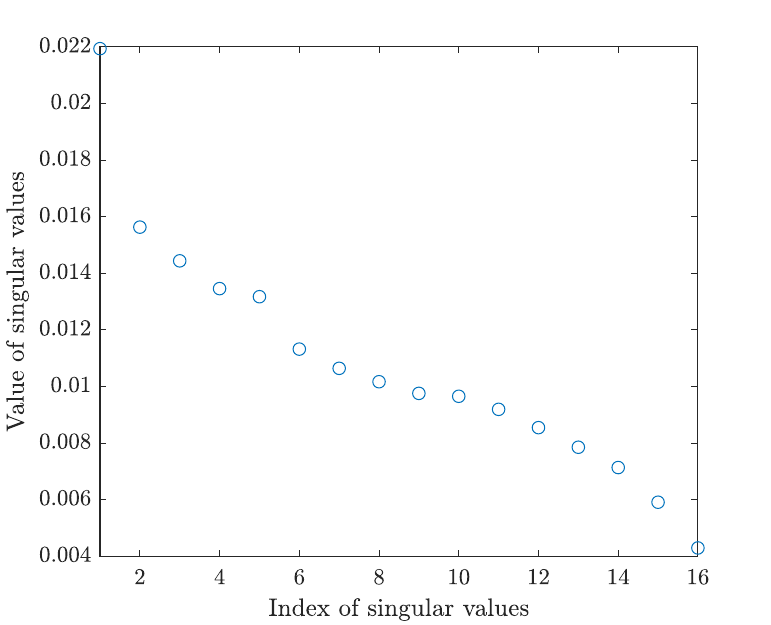}}\hfill
\subfigure[$C=0.1i$]{\includegraphics[width=0.33\textwidth]{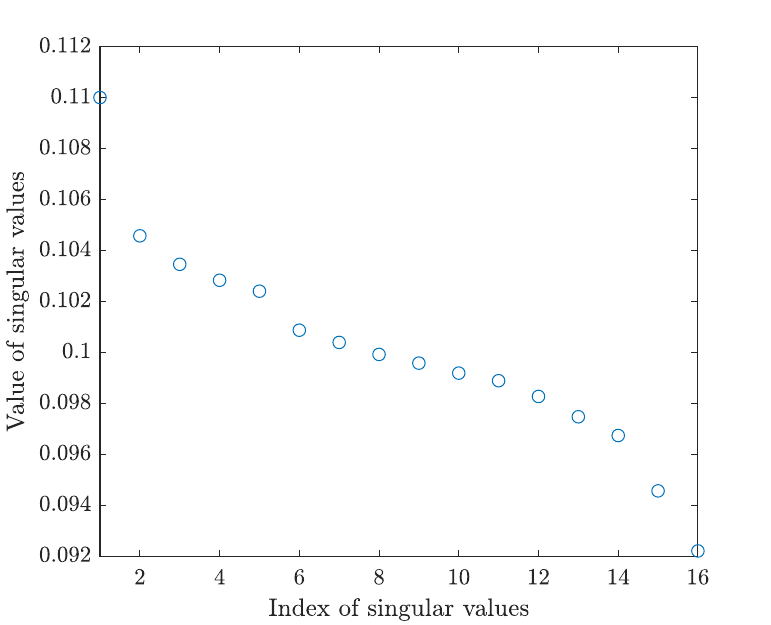}}
\caption{\label{SV2}(Example \ref{ex2}) Distribution of the singular values of $\mathbb{K}(C)$ at $f=\SI{1}{\giga\hertz}$ with various $C$.}
\end{figure}

\begin{figure}[h]
\subfigure[$C=0$]{\includegraphics[width=0.33\textwidth]{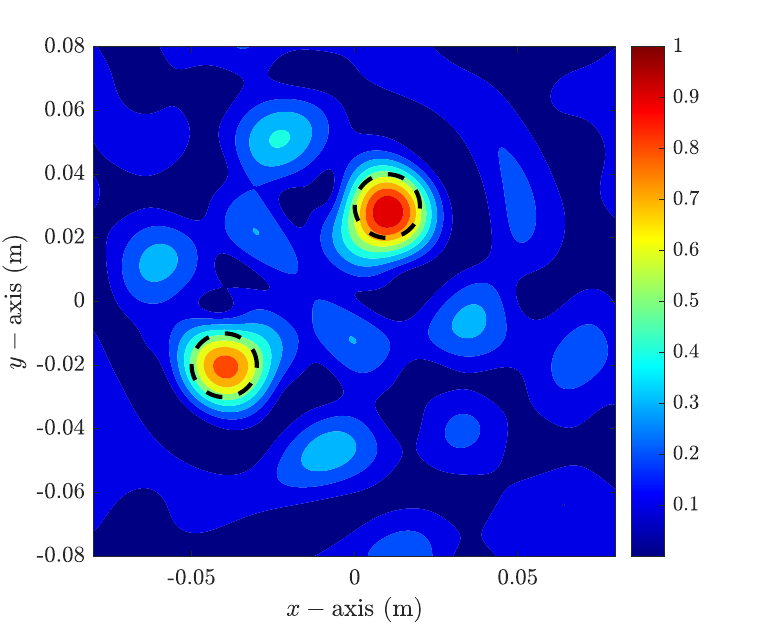}}\hfill
\subfigure[$C=0.01$]{\includegraphics[width=0.33\textwidth]{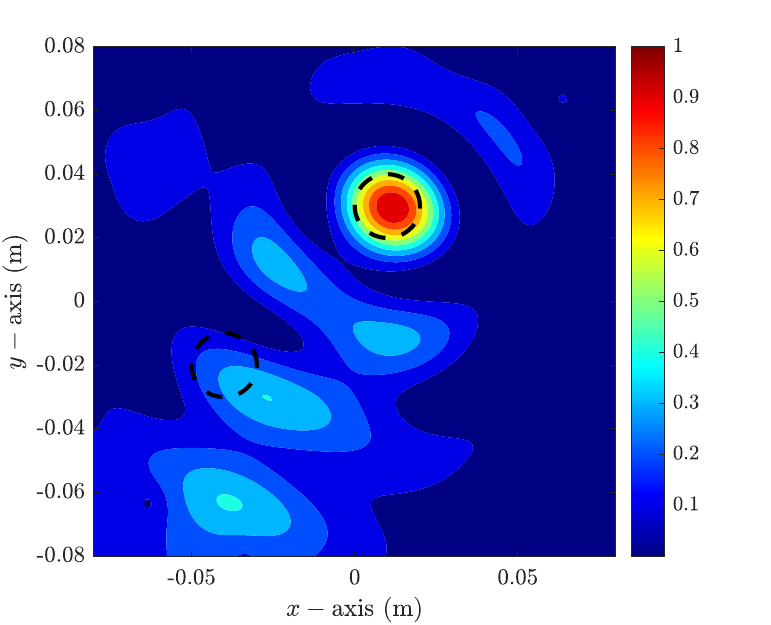}}\hfill
\subfigure[$C=0.1$]{\includegraphics[width=0.33\textwidth]{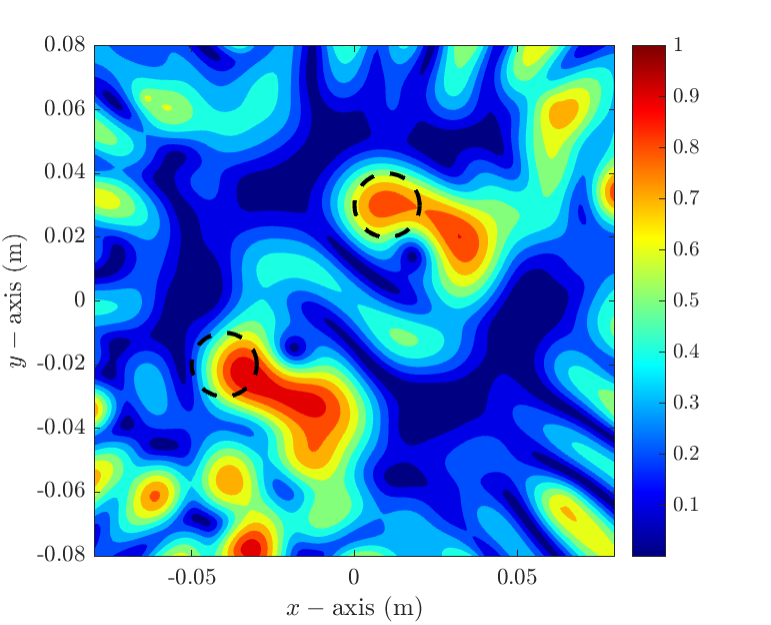}}\hfill\\
\subfigure[$C=0.001i$]{\includegraphics[width=0.33\textwidth]{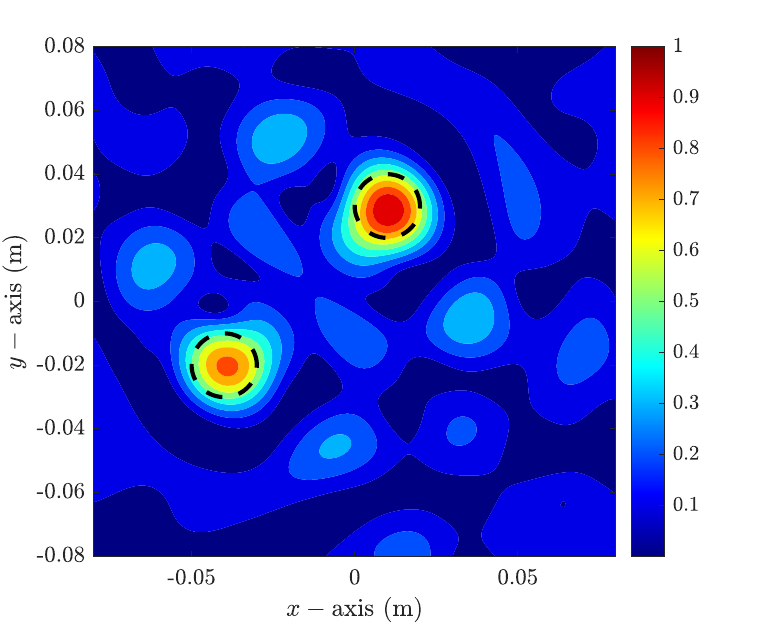}}\hfill
\subfigure[$C=0.01i$]{\includegraphics[width=0.33\textwidth]{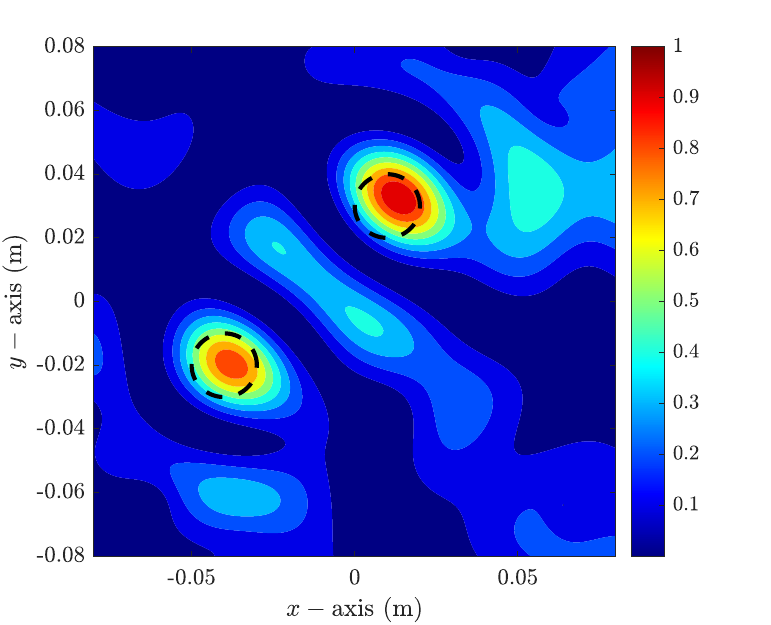}}\hfill
\subfigure[$C=0.1i$]{\includegraphics[width=0.33\textwidth]{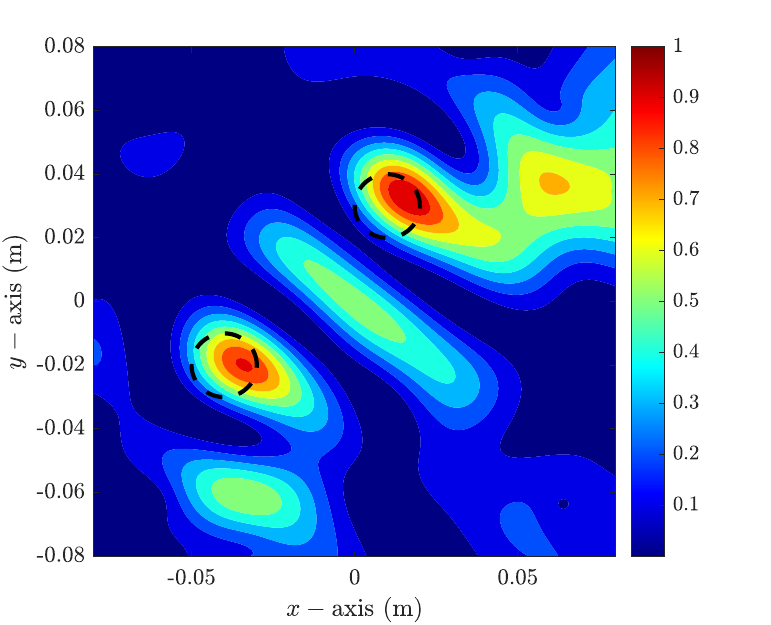}}\hfill
\caption{\label{Result2}(Example \ref{ex2}) Maps of $\mathfrak{F}(\ms,C)$ at $f=\SI{1}{\giga\hertz}$ with various $C$. Black-colored dashed lines describe the $\p D_1\cup\p D_2$.}
\end{figure}

\begin{example}[Imaging of large object]\label{ex3}
Although the equation \eqref{Approximation1} holds for small object, we now apply the $\mathfrak{F}(\ms,C)$ for retrieving a large object. Figures \ref{SV3} and \eqref{Result3} exhibit the distribution of singular values of $\mathbb{K}(C)$ and corresponding imaging results of $\mathbb{K}(C)$ in the presence of a circular object with radius $\alpha=\SI{0.048}{\meter}$ and material properties $(\eps,\sigma)=(15\eps_0,\SI{0.5}{\siemens/\meter})$. Similarly with the previous results in Examples \ref{ex1} and \ref{ex2}, it is possible to recognize the outline shape of object if $C=0$ and $C=0.001i$, i.e., if the value of $|C|$ is very small. However, it is impossible to identify the object if $|C|$ is not small such that $|C|\geq0.01$.
\end{example}

\begin{figure}[h]
\subfigure[$C=0$]{\includegraphics[width=0.33\textwidth]{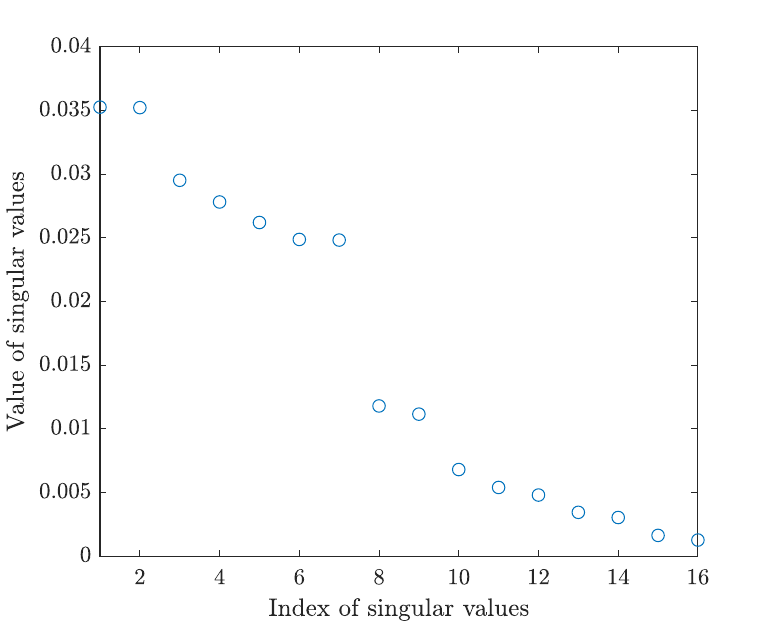}}\hfill
\subfigure[$C=0.01$]{\includegraphics[width=0.33\textwidth]{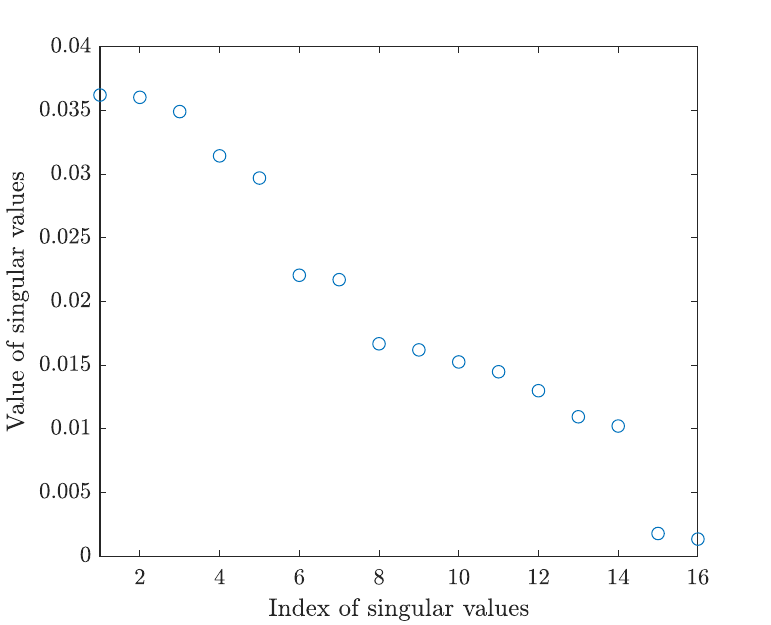}}\hfill
\subfigure[$C=0.1$]{\includegraphics[width=0.33\textwidth]{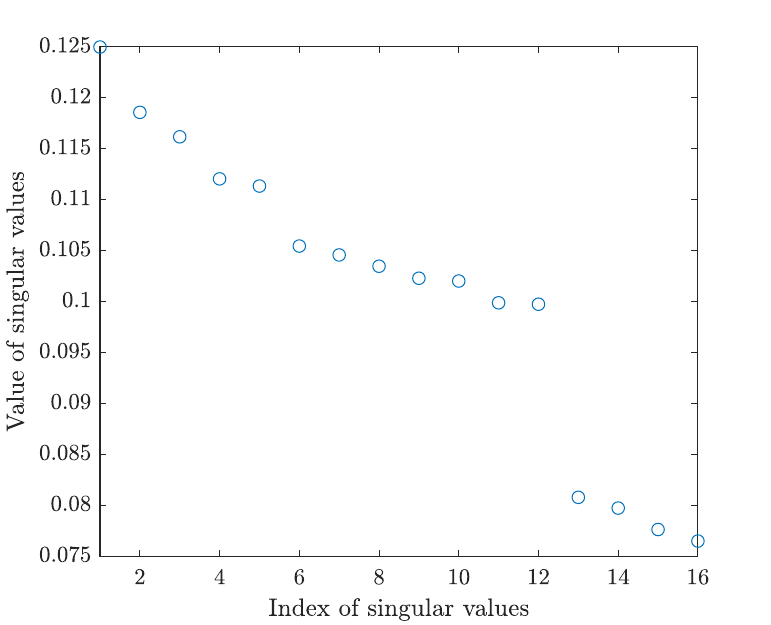}}\\
\subfigure[$C=0.001i$]{\includegraphics[width=0.33\textwidth]{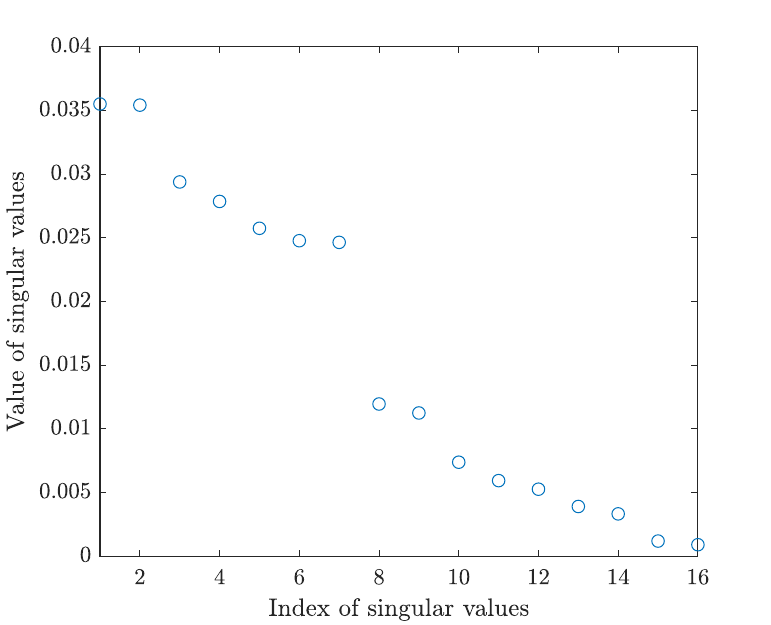}}\hfill
\subfigure[$C=0.01i$]{\includegraphics[width=0.33\textwidth]{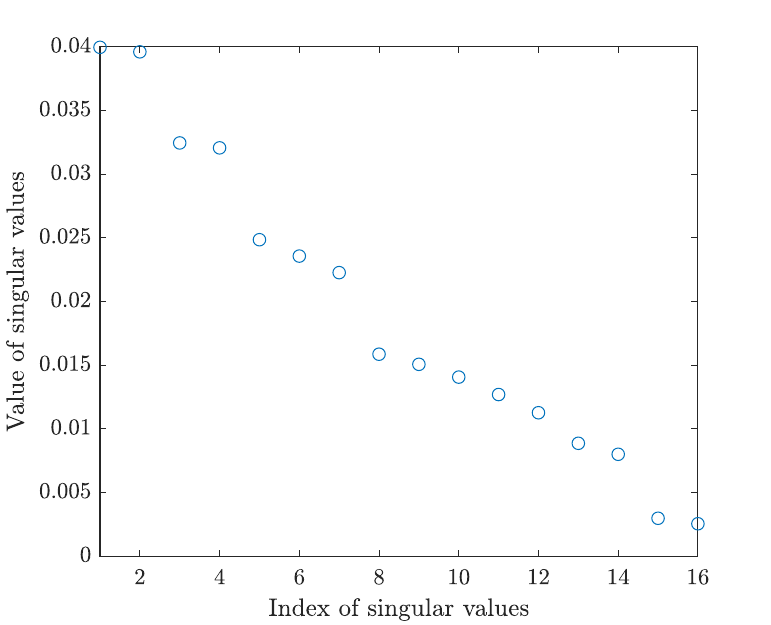}}\hfill
\subfigure[$C=0.1i$]{\includegraphics[width=0.33\textwidth]{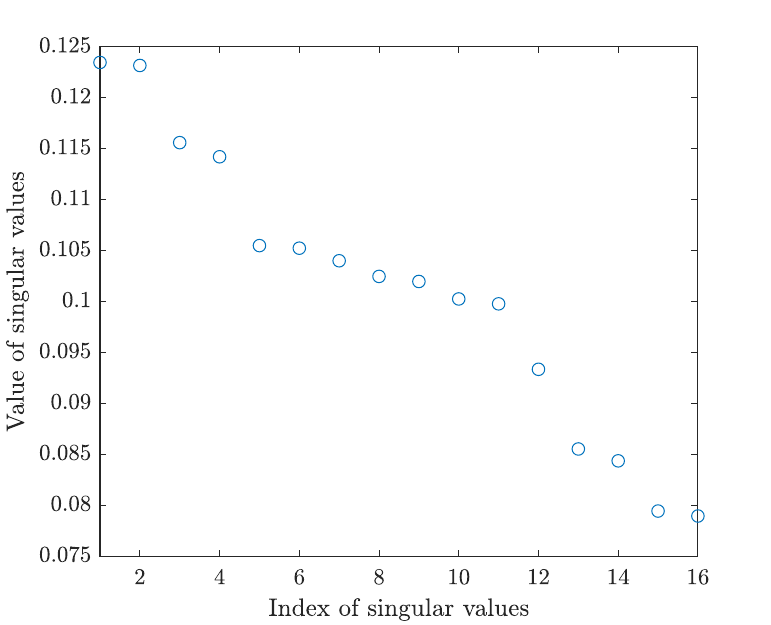}}
\caption{\label{SV3}(Example \ref{ex3}) Distribution of the singular values of $\mathbb{K}(C)$ at $f=\SI{1}{\giga\hertz}$ with various $C$.}
\end{figure}

\begin{figure}[h]
\subfigure[$C=0$]{\includegraphics[width=0.33\textwidth]{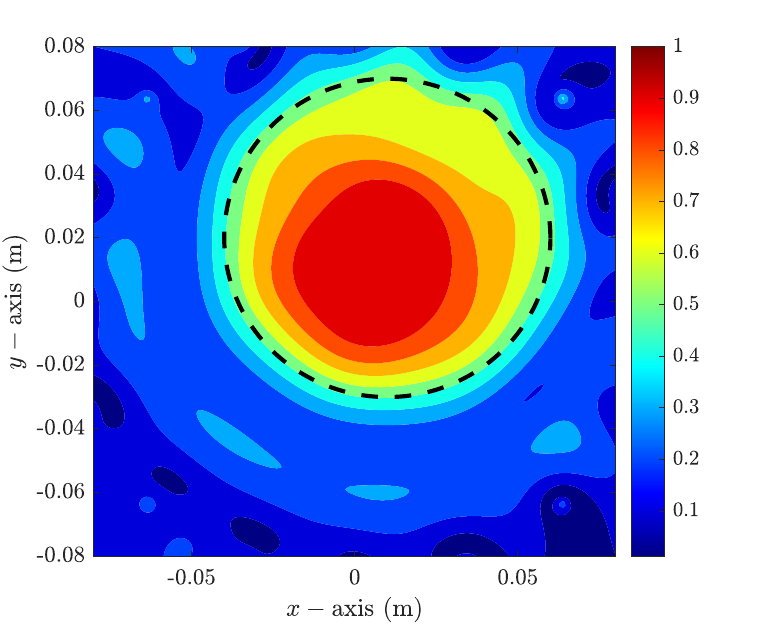}}\hfill
\subfigure[$C=0.01$]{\includegraphics[width=0.33\textwidth]{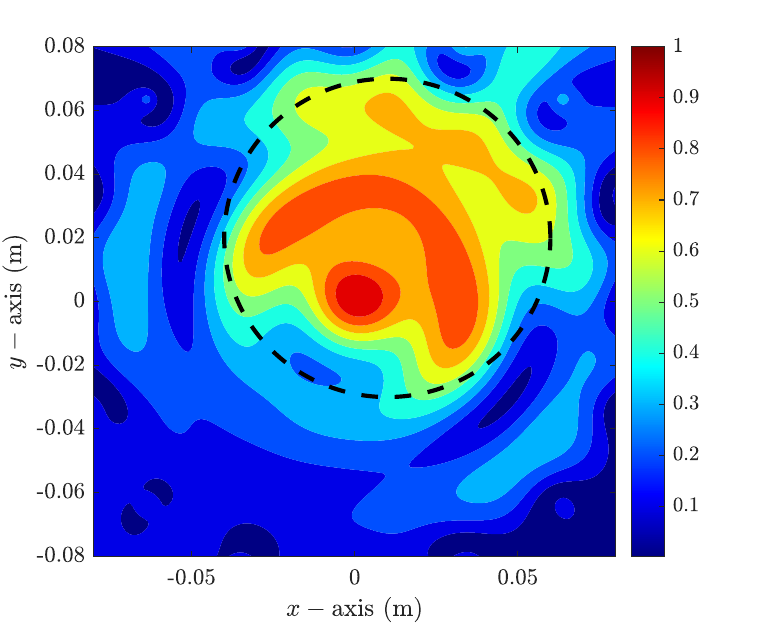}}\hfill
\subfigure[$C=0.1$]{\includegraphics[width=0.33\textwidth]{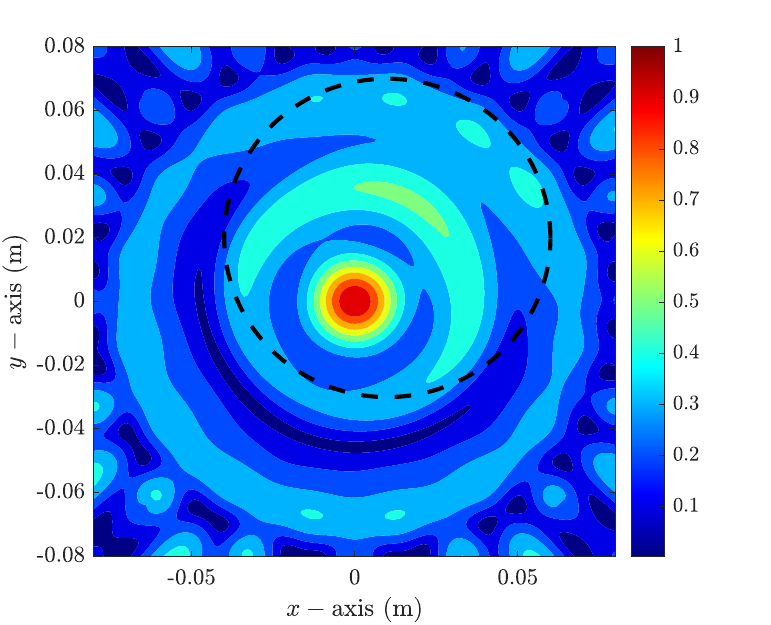}}\hfill\\
\subfigure[$C=0.001i$]{\includegraphics[width=0.33\textwidth]{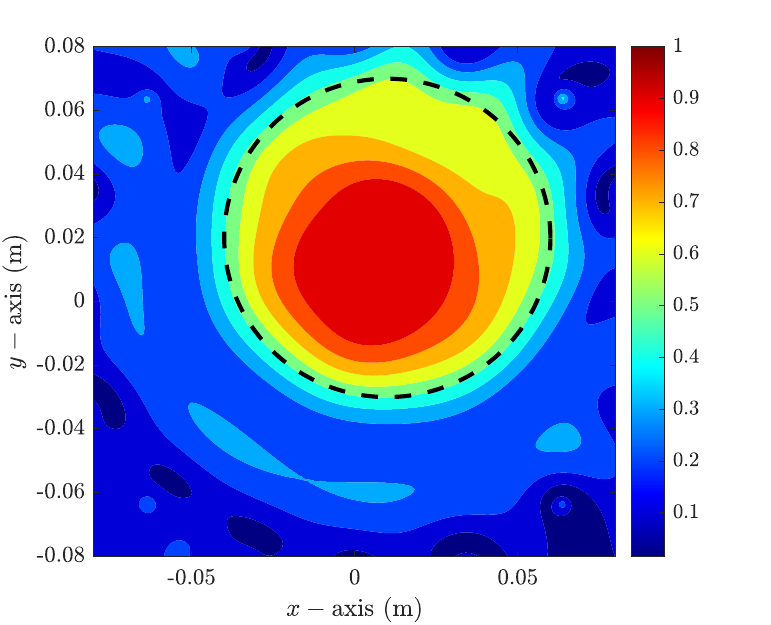}}\hfill
\subfigure[$C=0.01i$]{\includegraphics[width=0.33\textwidth]{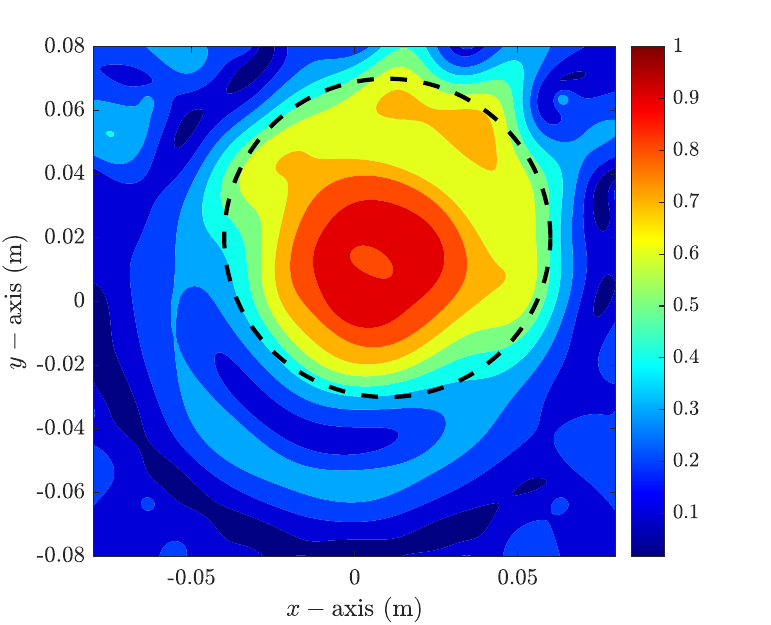}}\hfill
\subfigure[$C=0.1i$]{\includegraphics[width=0.33\textwidth]{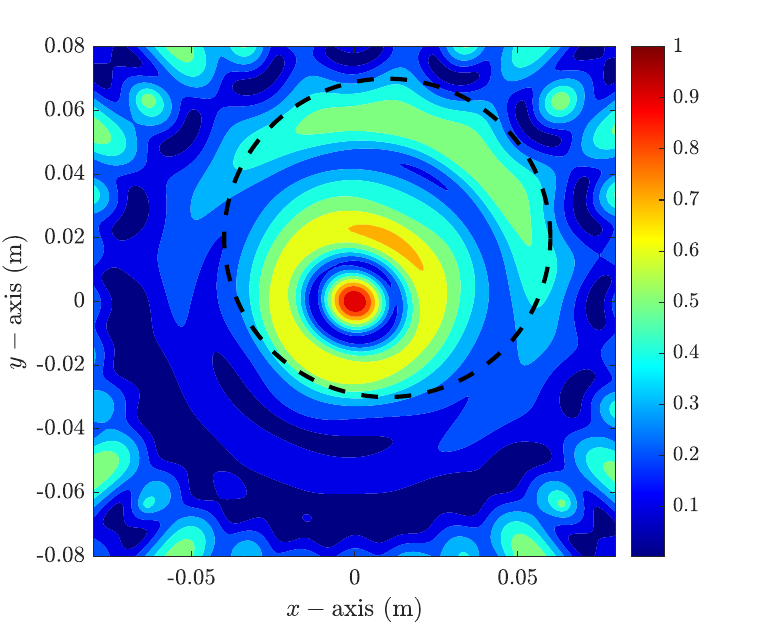}}\hfill
\caption{\label{Result3}(Example \ref{ex3}) Maps of $\mathfrak{F}(\ms,C)$ at $f=\SI{1}{\giga\hertz}$ with various $C$. Black-colored dashed line describes the $\p D$.}
\end{figure}

\subsection{Simulation Results with Experimental Data}
We now apply the imaging function of \eqref{ImagingFunction} for identifying small objects and demonstrating the effect of converted constant $C$. To this end, the scattering matrix was constructed by using the microwave machine, which was filled by water, manufactured by the research group of the Electronics and Telecommunications Research Institute (ETRI), refer to \cite{KLKJS}. The ROI $\Omega$ was selected as the interior of a circular region centered at the origin with radius $\SI{0.085}{\meter}$ and material properties $(\epsb,\sigmab)=(78\eps_0,\SI{0.2}{\siemens/\meter})$ at $f=\SI{925}{\mega\hertz}$. For an illustration of the microwave machine and selected objects (cross-section of screw drivers and plastic straw), we refer to Figure \ref{ConfigurationReal}.

\begin{figure}[h]
\subfigure[three screw drivers]{\includegraphics[width=0.5\textwidth]{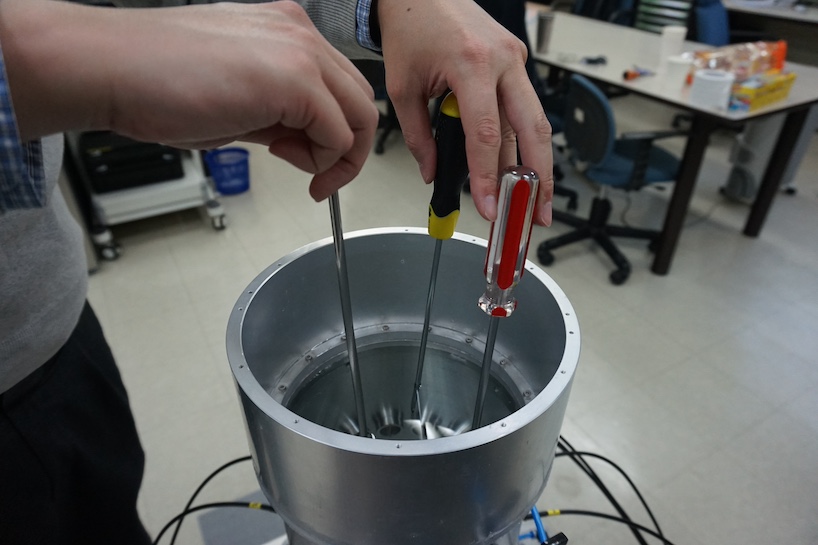}}\hfill
\subfigure[screw driver and plastic straw]{\includegraphics[width=0.5\textwidth]{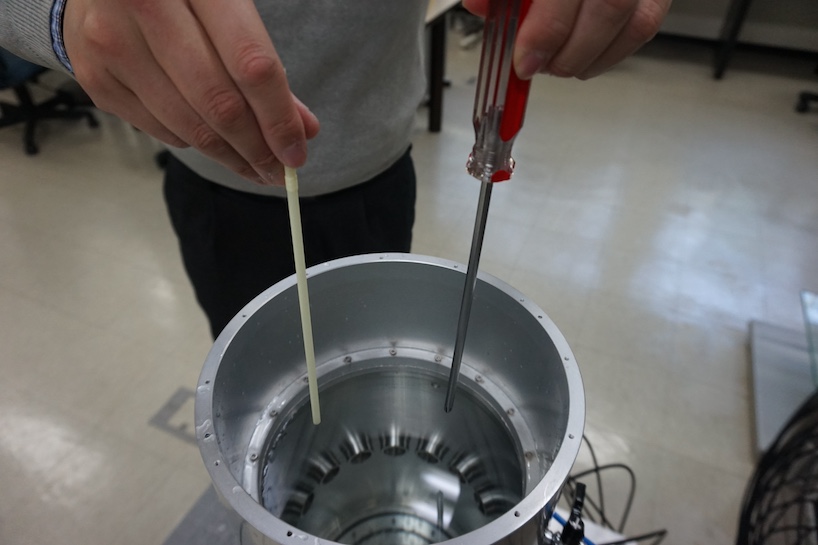}}
\caption{\label{ConfigurationReal}Pictures of real-data experiments for Examples \ref{exR1} (left) and \ref{exR2} (right).}
\end{figure}

\begin{example}[Imaging of small objects with same material properties]\label{exR1}
Here, we consider the imaging of cross-section of screw drivers $D_s$, $s=1,2,3$. Figure \ref{SVR1} shows the distribution of singular values of $\mathbb{K}(C)$ with various $C$. Similarly with the synthetic data experiment  (Example \ref{ex1}), it is easy to select first three singular values to define the imaging function when $C=0$, $0.01$, $0.001i$, and $0.01i$. However, it is not easy to select nonzero singular values when $C=0.1$ and $C=0.1i$.

Figure \ref{ResultR1} shows the maps of $\mathfrak{F}(\ms,C)$ with various $C$. Similar to the results in Example \ref{ex1}, the location and outline shape of objects can be identified very accurately through the map of $\mathfrak{F}(\ms,C)$ but some artifacts are included in the map when $|C|=0.01$. Unfortunately, it is very difficult to identify some objects when $C=0.1$ and distinguish the objects and artifacts with large magnitude when $C=0.1i$. Thus, same as the synthetic data experiments, the imaging result of $\mathfrak{F}(\ms,C)$ significantly depends on the choice of $C$ and good results can be obtained when $|C|$ is sufficiently small for identifying small objects.
\end{example}

\begin{figure}[h]
\subfigure[$C=0$]{\includegraphics[width=0.33\textwidth]{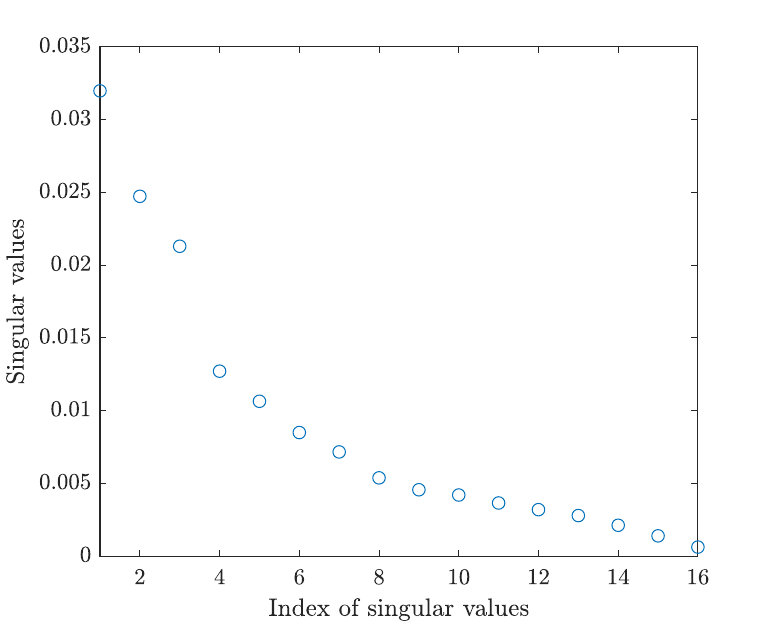}}\hfill
\subfigure[$C=0.01$]{\includegraphics[width=0.33\textwidth]{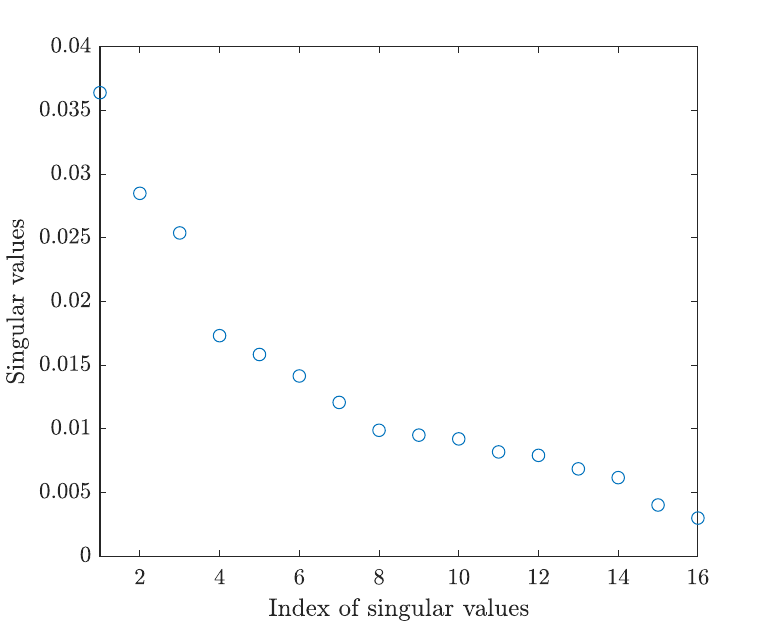}}\hfill
\subfigure[$C=0.1$]{\includegraphics[width=0.33\textwidth]{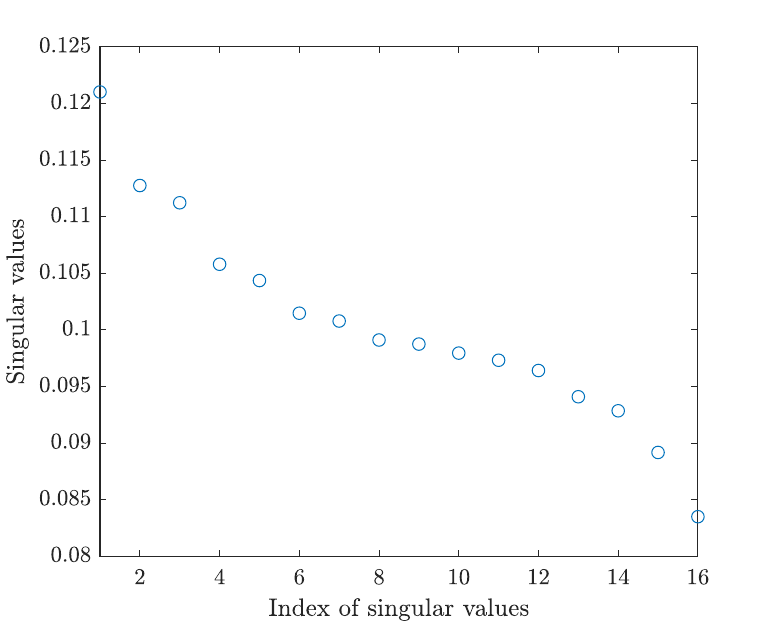}}\\
\subfigure[$C=0.001i$]{\includegraphics[width=0.33\textwidth]{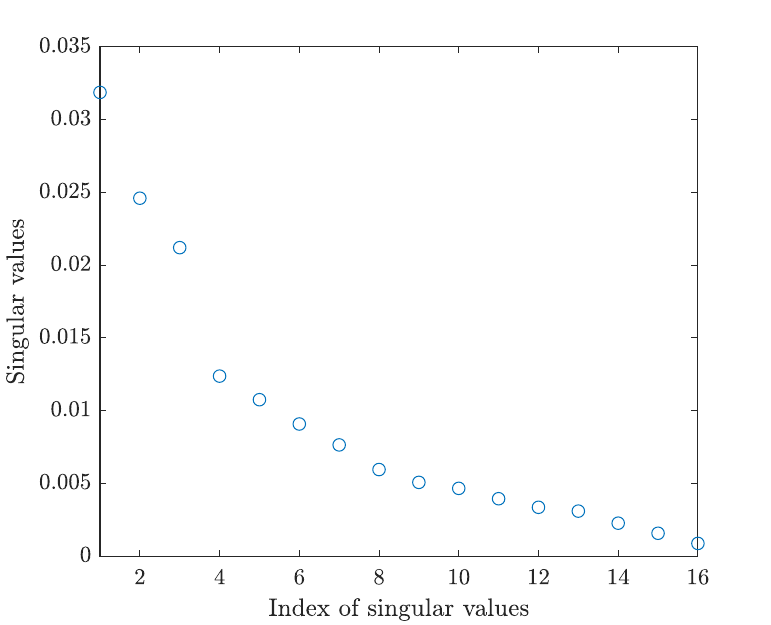}}\hfill
\subfigure[$C=0.01i$]{\includegraphics[width=0.33\textwidth]{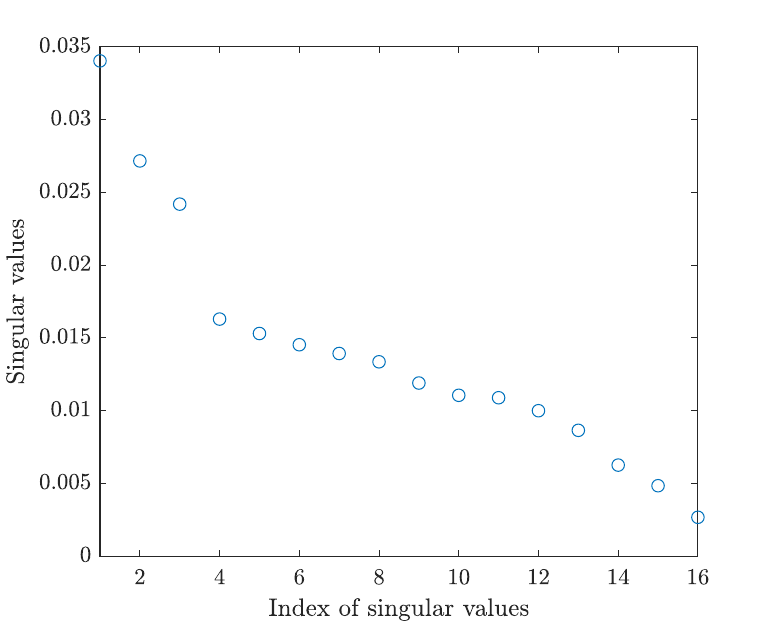}}\hfill
\subfigure[$C=0.1i$]{\includegraphics[width=0.33\textwidth]{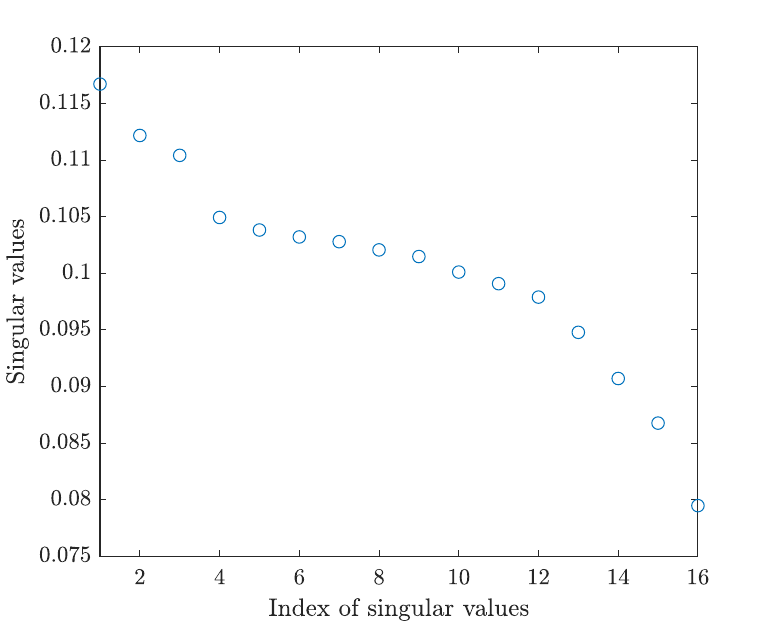}}
\caption{\label{SVR1}(Example \ref{exR1}) Distribution of the singular values of $\mathbb{K}(C)$ at $f=\SI{925}{\mega\hertz}$ with various $C$.}
\end{figure}

\begin{figure}[h]
\subfigure[$C=0$]{\includegraphics[width=0.33\textwidth]{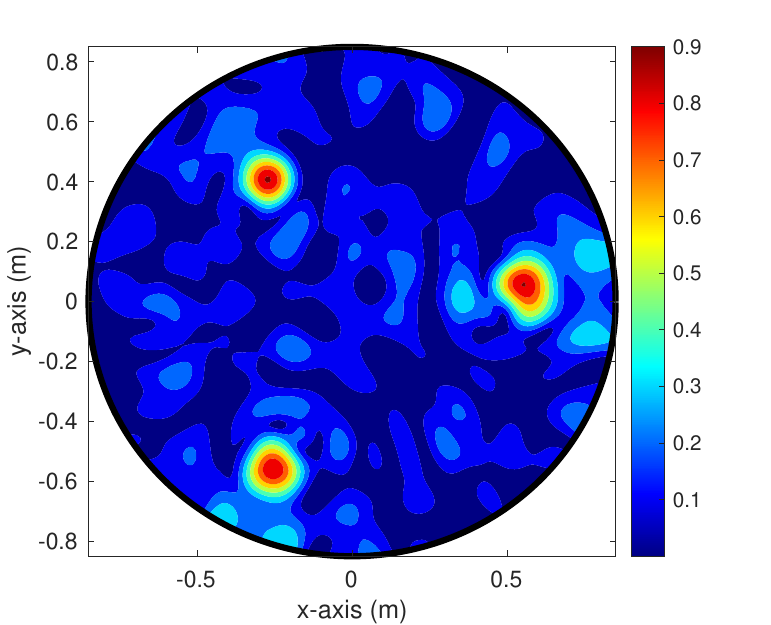}}\hfill
\subfigure[$C=0.01$]{\includegraphics[width=0.33\textwidth]{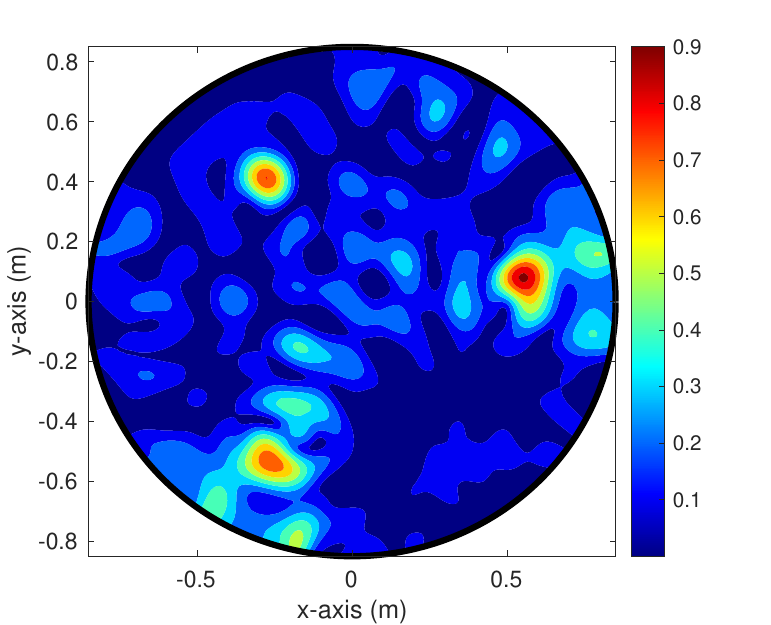}}\hfill
\subfigure[$C=0.1$]{\includegraphics[width=0.33\textwidth]{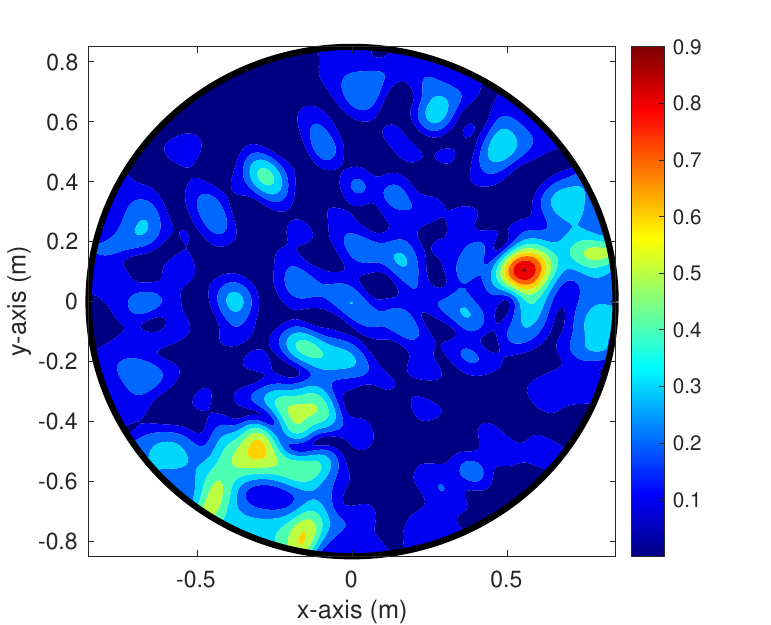}}\hfill\\
\subfigure[$C=0.001i$]{\includegraphics[width=0.33\textwidth]{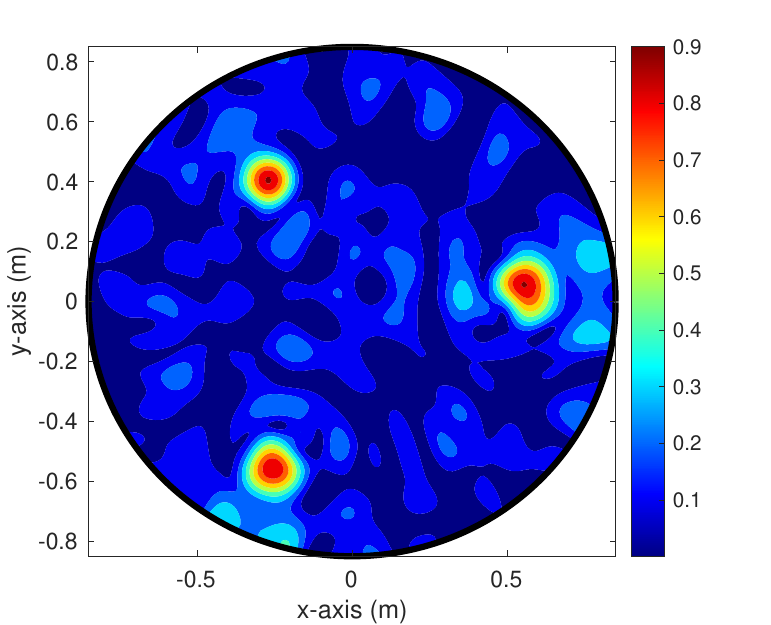}}\hfill
\subfigure[$C=0.01i$]{\includegraphics[width=0.33\textwidth]{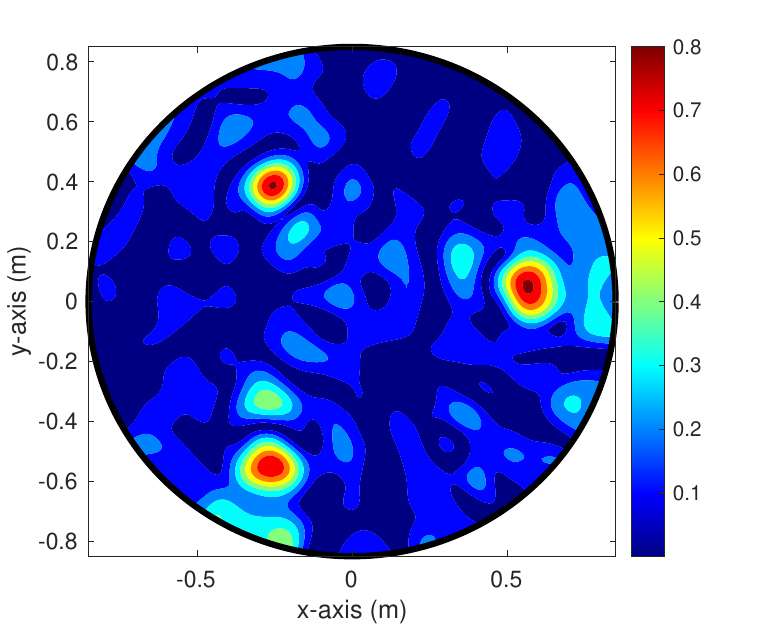}}\hfill
\subfigure[$C=0.1i$]{\includegraphics[width=0.33\textwidth]{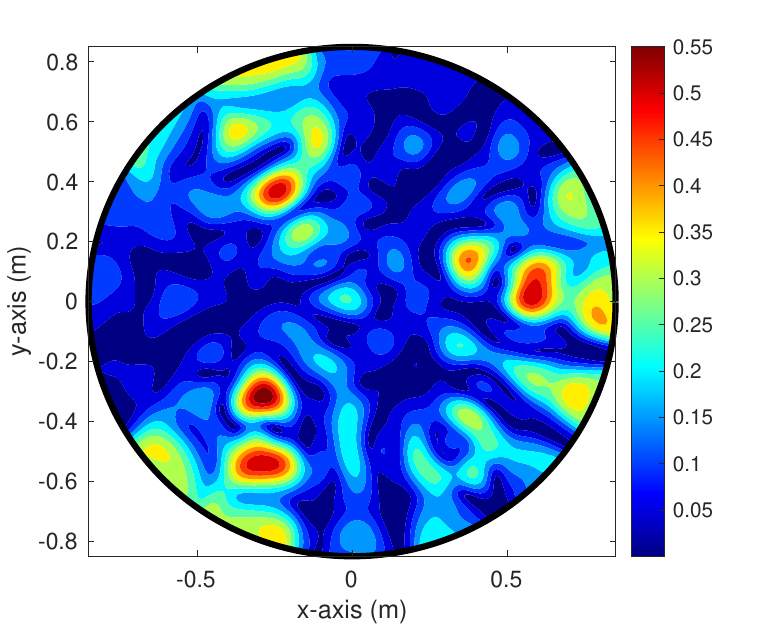}}\hfill
\caption{\label{ResultR1}(Example \ref{exR1}) Maps of $\mathfrak{F}(\ms,C)$ at $f=\SI{925}{\mega\hertz}$ with various $C$.}
\end{figure}

\begin{example}[Imaging of small objects with different material properties]\label{exR2}
Now, we exhibit a result for identifying cross-section of   screw driver and plastic straw. Similar to the previous result in Example \ref{exR1}, there is no difficult to select nonzero singular value when $C=0$, $0.01$, $0.001i$, and $0.01i$ but it is still difficult to select when $C=0.1$ and $C=0.1i$. In this case, we selected $\tau_1$ as the nonzero singular value of $\mathbb{K}(C)$ by regarding differences $\tau_n-\tau_{n+1}$, $n=1,2,\cdots,15$. It is worth to emphasize that the values of permittivity of the plastic straw and screw driver are extremely small and large, respectively. Thus, the existence of plastic straw does not affect to the $\mathcal{S}_{\meas}(n,m)$ because the value of $\mathcal{O}_D$ of \eqref{ObjectIncident} for screw driver is extremely larger than the one of plastic straw. Correspondingly, only one singular value of $\mathbb{K}(C)$ that is significantly larger than the others has been appeared.

Figure \ref{ResultR2} shows the maps of $\mathfrak{F}(\ms,C)$ with various $C$. Opposite to the previous results, the location and outline shape of cross-section of screw driver can be identified very accurately through the map of $\mathfrak{F}(\ms,C)$ with $|C|\leq0.1$. Unfortunately, it is impossible to recognize the existence of the cross-section of screw driver for any value $C$.
\end{example}

\begin{figure}[h]
\subfigure[$C=0$]{\includegraphics[width=0.33\textwidth]{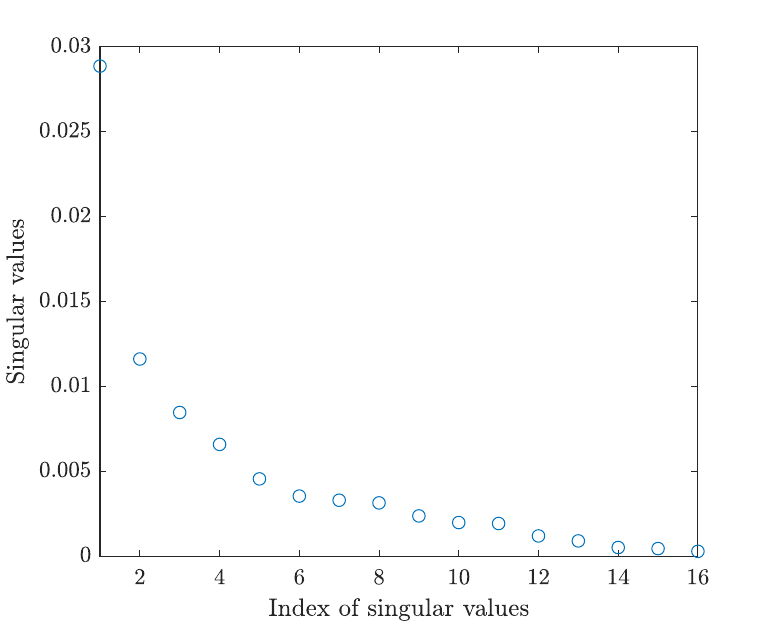}}\hfill
\subfigure[$C=0.01$]{\includegraphics[width=0.33\textwidth]{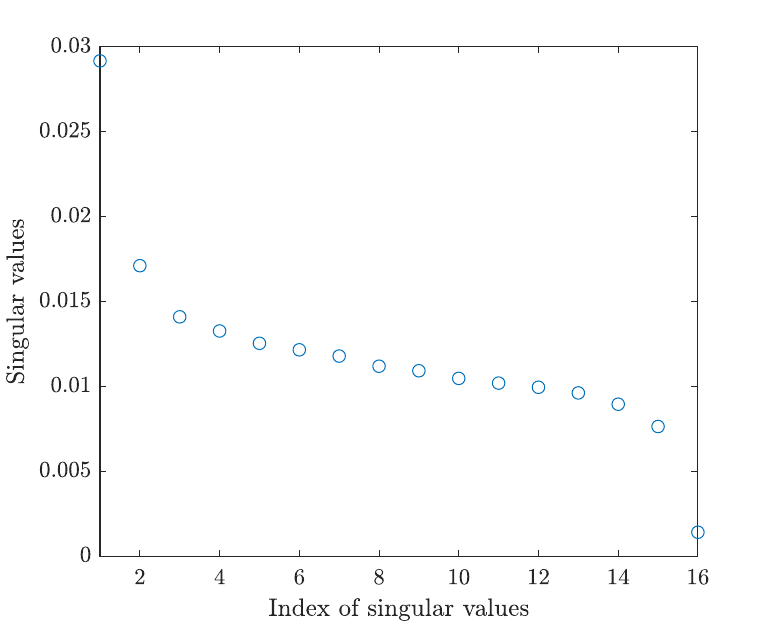}}\hfill
\subfigure[$C=0.1$]{\includegraphics[width=0.33\textwidth]{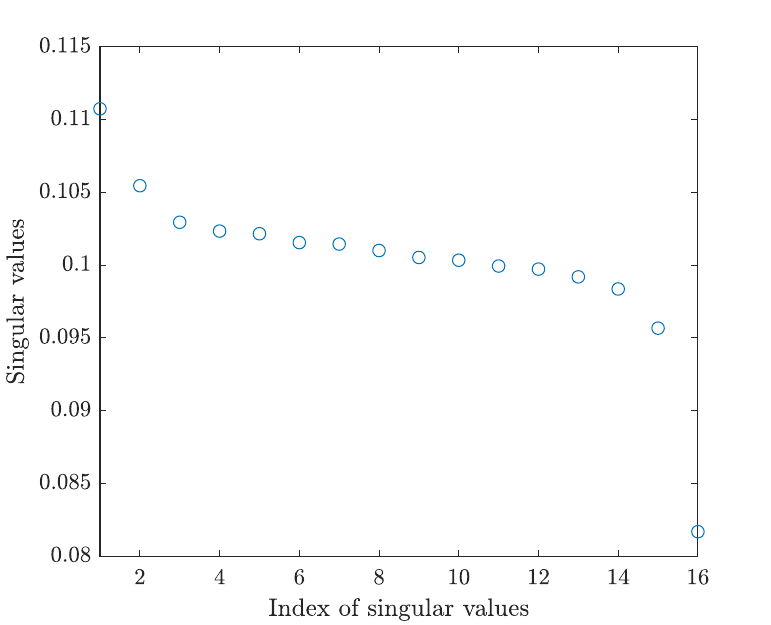}}\\
\subfigure[$C=0.001i$]{\includegraphics[width=0.33\textwidth]{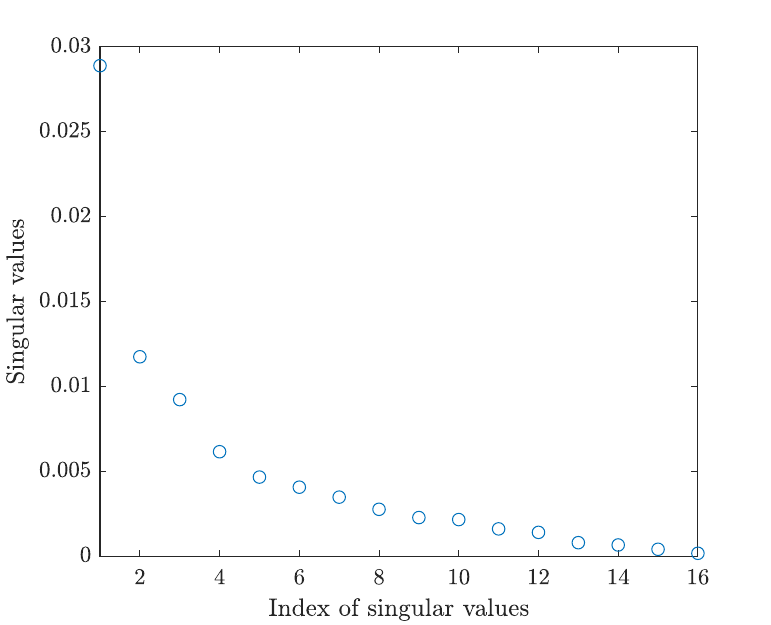}}\hfill
\subfigure[$C=0.01i$]{\includegraphics[width=0.33\textwidth]{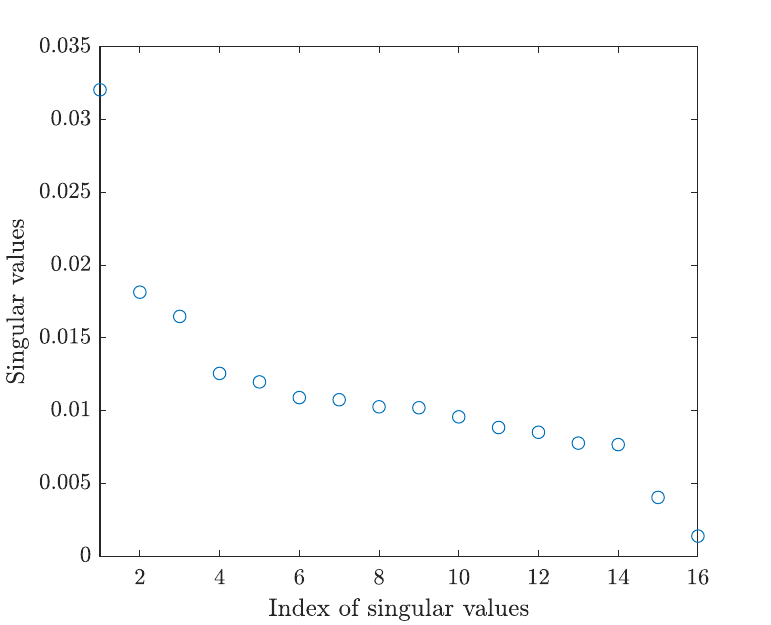}}\hfill
\subfigure[$C=0.1i$]{\includegraphics[width=0.33\textwidth]{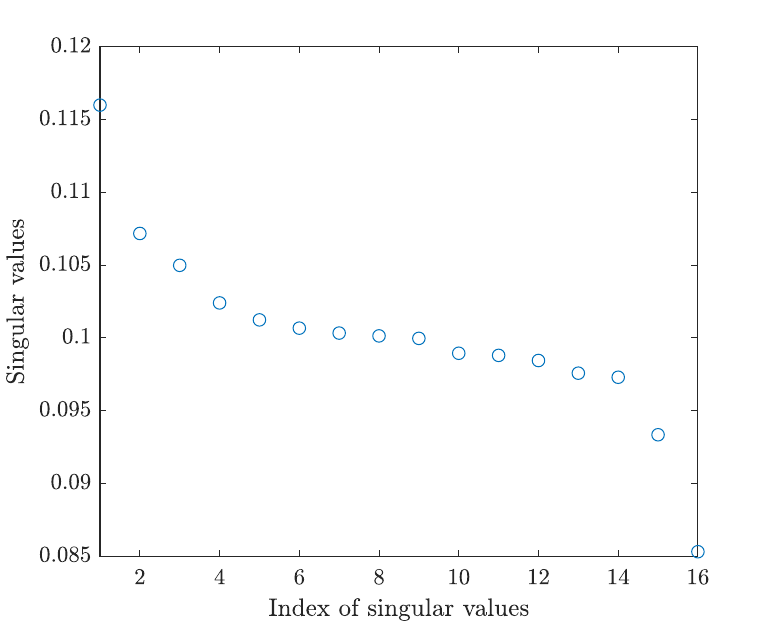}}
\caption{\label{SVR2}(Example \ref{exR2}) Distribution of the singular values of $\mathbb{K}(C)$ at $f=\SI{925}{\mega\hertz}$ with various $C$.}
\end{figure}

\begin{figure}[h]
\subfigure[$C=0$]{\includegraphics[width=0.33\textwidth]{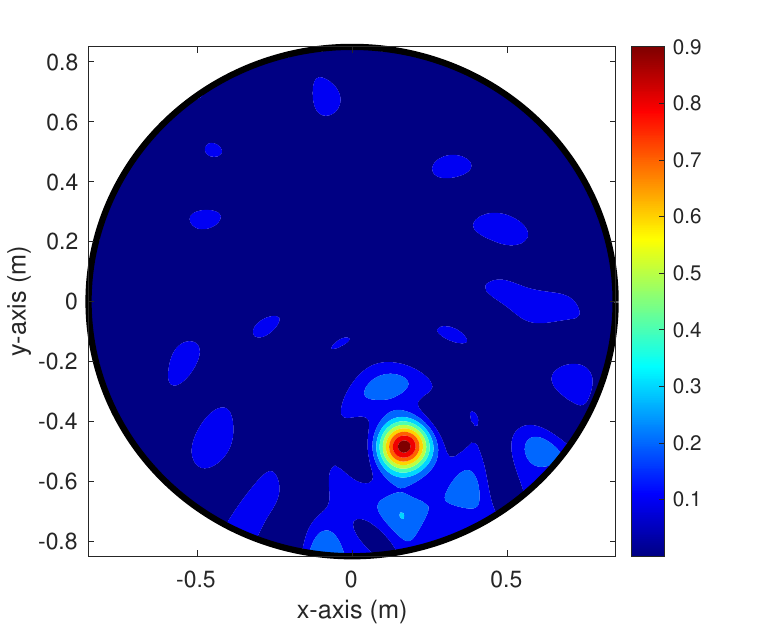}}\hfill
\subfigure[$C=0.01$]{\includegraphics[width=0.33\textwidth]{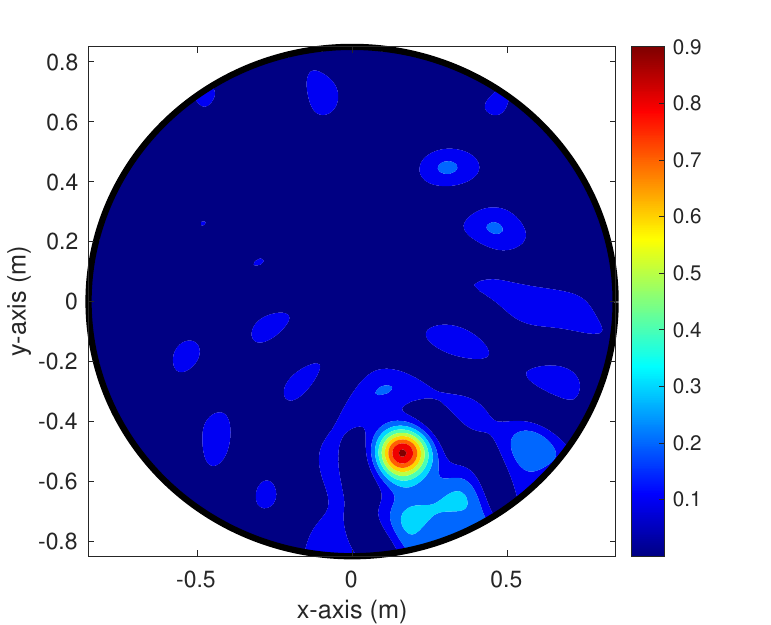}}\hfill
\subfigure[$C=0.1$]{\includegraphics[width=0.33\textwidth]{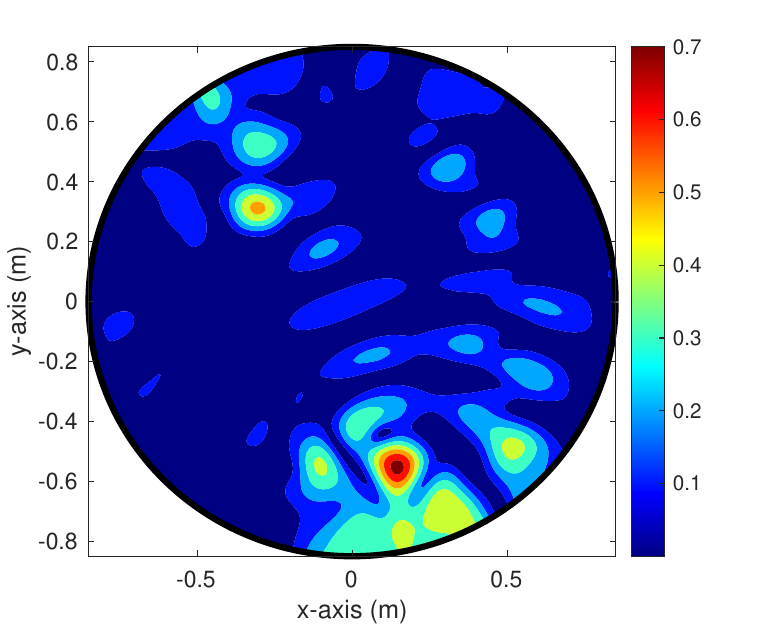}}\hfill\\
\subfigure[$C=0.001i$]{\includegraphics[width=0.33\textwidth]{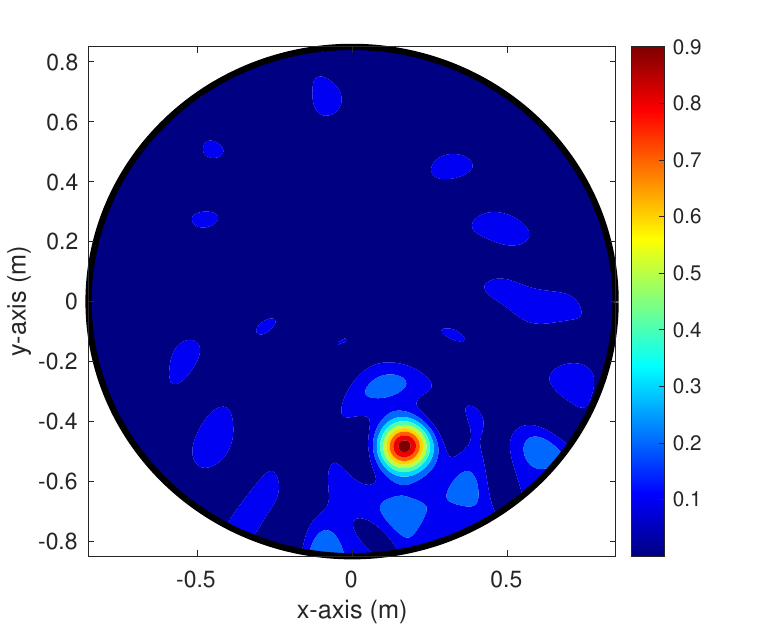}}\hfill
\subfigure[$C=0.01i$]{\includegraphics[width=0.33\textwidth]{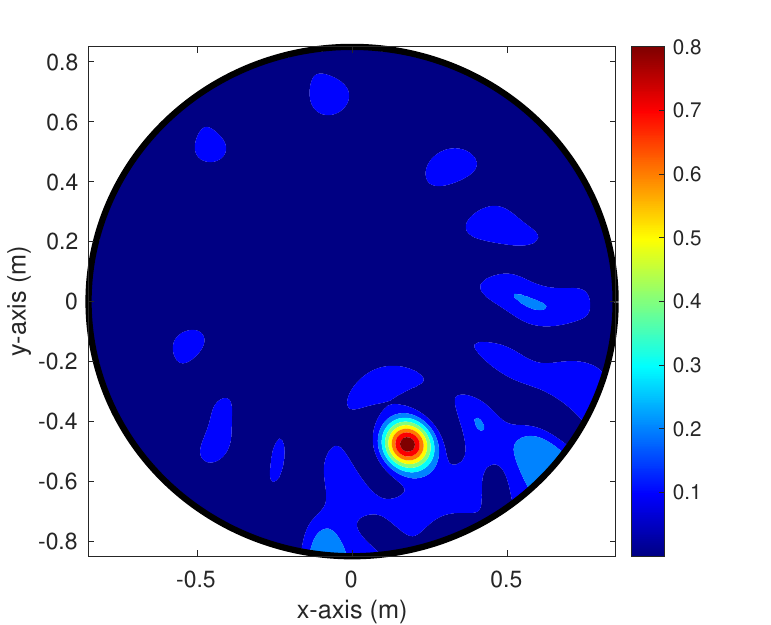}}\hfill
\subfigure[$C=0.1i$]{\includegraphics[width=0.33\textwidth]{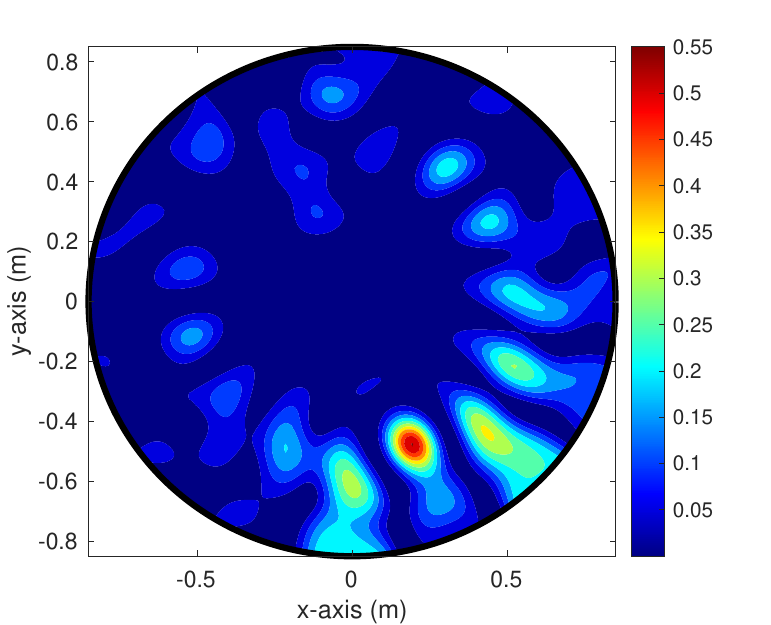}}\hfill
\caption{\label{ResultR2}(Example \ref{exR2}) Maps of $\mathfrak{F}(\ms,C)$ at $f=\SI{925}{\mega\hertz}$ with various $C$.}
\end{figure}

On the basis of the Theorem \ref{TheoremStructure}, Remark \ref{Remark2}, and simulation results with synthetic and experimental data, we can conclude that applying small value of $|C|$ guarantees good imaging results but the criteria for \textit{small value} is ambiguous. Hence, converting unknown diagonal data to the $C=0$ will be the best choice for a proper application of the SM. This is the theoretical reason why the zero constant was chosen instead of the unknown data in most studies.

\section{Conclusion}\label{sec:5}
In this study, we considered the application of SM for the quick identification of small objects by converting the unknown diagonal elements of the scattering matrix into a constant. We explained the effectiveness of this approach by demonstrating that the imaging function can be represented using an infinite series of Bessel functions of integer order, antenna number and arrangement, and applied constant. Moreover, we established that small objects can be uniquely retrieved when the absolute value of the applied constant is zero or close to zero.

It is important to note that our analysis and application scope in this paper is limited to retrieving small objects in a two-dimensional setting. However, future research focusing on identifying large objects and extending to three-dimensional microwave imaging would present interesting research topics.

\bibliographystyle{plain}
\bibliography{../../../References}

\end{document}